\documentclass{article}
\usepackage{amsmath}
\usepackage{amssymb}
\usepackage{amsthm}
\usepackage{xifthen}
\usepackage{txfonts}
\usepackage{turnstile} 
\usepackage{bbm} 
\usepackage{enumerate} 
\usepackage{mathtools}
\usepackage{float}
\usepackage{tikz-cd}
\usepackage{tikz}
\usepackage{circuitikz}
\usetikzlibrary{positioning, arrows.meta}
\DeclareFontFamily{U}{mathx}{}
\DeclareFontShape{U}{mathx}{m}{n}{<-> mathx10}{}
\DeclareSymbolFont{mathx}{U}{mathx}{m}{n}
\DeclareMathAccent{\widehat}{0}{mathx}{"70}
\DeclareMathAccent{\widecheck}{0}{mathx}{"71}

\newtheorem{thm}{Theorem}[section]
\newtheorem{lemma}[thm]{Lemma}
\newtheorem{prop}[thm]{Proposition}
\newtheorem{cor}[thm]{Corollary}
\newtheorem{quest}{Question}

\theoremstyle{definition}
\newtheorem{dfn}[thm]{Definition}

\theoremstyle{remark}

\newtheorem*{rmk}{Remark}

\newtheorem*{claim}{Claim}
\newenvironment{claimproof}[1]{\par\noindent\textit{Proof:}\space#1}{\leavevmode\unskip\penalty9999 \hbox{}\nobreak\hfill\quad\hbox{$\blacksquare$}}

\newcommand{\ZF}{\text{ZF}}

\newcommand{\DC}{\text{DC}}
\newcommand{\HOD}{\text{HOD}}
\newcommand{\OD}{\text{OD}}
\newcommand{\reals}{\mathbb{R}}
\newcommand{\RR}{\mathbb{R}}

\newcommand{\zahlen}{\mathbb{Z}}
\newcommand{\rationals}{\mathbb{Q}}

\DeclareMathOperator{\dom}{\text{dom}}
\newcommand{\nat}{\mathbb{N}}
\newcommand{\Coll}[2]{\text{Coll}({#1}, {#2})}

\newcommand{\balanceEq}{\equiv_b}
\newcommand{\forc}[2]{\dststile{#2}{#1}} 
\DeclareMathOperator{\OR}{OR}
\newcommand{\tup}[1]{\langle#1\rangle}
\newcommand{\tupp}[1]{\left\langle#1\right\rangle}
\newcommand{\set}[1]{\{#1\}}
\DeclareMathOperator{\lh}{lh}

\newcommand{\ptwiseImg}{\mathbin{''}}
\DeclareMathOperator{\Uf}{\mathsf{Uf}}

\makeatletter
\newcommand{\sqsubsetsim}{\mathrel{\mathpalette\su@sim\sqsubset}}
\newcommand{\sqsupsetsim}{\mathrel{\mathpalette\su@sim\sqsupset}}
\newcommand{\nsqsubsetsim}{\mathrel{\mathpalette\su@sim\nsqsubset}}
\newcommand{\nsqsupsetsim}{\mathrel{\mathpalette\su@sim\nsqsupset}}
\newcommand{\su@sim}[2]{%
  \begingroup
  \sbox\tw@{$\m@th#1\mkern-1mu\sqsubseteq\mkern-1mu$}%
  \raisebox{\dimeval{\ht\tw@-\height}}{%
    \vbox{\m@th\offinterlineskip
      \sbox\z@{$#1\mkern1mu$}
      \ialign{%
        \hfil##\hfil\cr
        $#1#2$\cr
        \noalign{\vskip \wd\z@}
        \ifx#2\supset
          \reflectbox{\resizebox{\wd\tw@}{!}{$#1\sim$}}%
        \else
          \resizebox{\wd\tw@}{!}{$#1\sim$}%
        \fi\cr
        \noalign{\vskip -2\wd\z@}
      }%
    }%
  }%
  \endgroup
}
\makeatother
\newcommand{\pbelow}{\sqsubsetsim} 

\newcommand{\npbelow}{\nsqsubsetsim}

\newcommand{\psbelow}{\sqsubset} 

\newcommand{\pless}{\lesssim} 
\newcommand{\psless}{<} 

\newcommand{\PrelinPoset}[1][] 
{
  \ifthenelse{\isempty{#1}}
    {\mathbf{P}}
    {\mathbf{P}({#1})}
}

\newcommand{\Pprelin}{\mathbf{P}_{\pbelow}} 

\usepackage{xcolor}

\usepackage{hyperref}
\definecolor{mypurple}{HTML}{7f00d4}
\definecolor{mygreen}{HTML}{71a300}
\hypersetup{
	colorlinks=true,
	linkcolor=mypurple,
	filecolor=mypurple,
	citecolor=mygreen,
	urlcolor=black,
}

\title{Funicular Preorders Can Be Prelinearized Without Nonprincipal Ultrafilters over $\mathbb N$}
\author{Azul Fatalini \and Luke Serafin}
\date{}

\begin{document}

\maketitle

\begin{abstract}
It is a consequence of the axiom of choice that every preorder can be extended to a total preorder while respecting the strict preorder relation. We call such an extension a prelinearization of the preorder and study the extent to which the axiom of choice is needed to construct prelinearizations. 
We isolate the class of funicular preorders, and show that these have prelinearizations in models of $\mathsf{ZF+DC}$ containing no nonprincipal ultrafilters over $\omega$. Funicular preorders include coordinatewise domination on $\RR^\omega$, Turing reducibility, and various preorders arising in social choice theory. The relevant models are constructed first using tools from the geometric set theory of Larson and Zapletal, which requires an inaccessible cardinal, and then the inaccessible cardinal is eliminated using methods of amalgamation for Cohen reals.

\end{abstract}

{\def\thefootnote{}\footnotetext{The first author was supported by UKRI Future Leaders Fellowship [MR/Y034058/1]. The second author was supported by NSF grants DMS-2153975 and DMS-2451350.}}

\section{Introduction}

An important aspect of choiceless set theory is gauging the logical strength of consequences of the axiom of choice under $\mathsf{ZF}$. 
If $A$ and $B$ are two statements that are provable in $\mathsf{ZFC}$, without the axiom of choice they could be equivalent, incomparable, or one may imply the other and not the other way around.
Implication and equivalence can generally be proven outright, but mathematically establishing incomparability under implication or the strictness of an implication usually requires constructing models of ZF in which one statement holds but not the other.
Until recently most work on models of the negation of the axiom of choice was focused on establishing that particular statements are consistent with ZF (perhaps augmented by large cardinals); examples of such statements are the axiom of determinacy~\cite{martin-steel-PD-proof} and the statement that $\aleph_1$ is a singular cardinal (the Feferman-Lévy model, for which see~\cite[Sec. 10.1]{jech-ac}).

Strategies to construct and verify witness models for the fact that $A$ does not imply $B$ under ZF only are an active area of investigation. 
The book \emph{Geometric Set Theory}~\cite{larson-zapletal-GST} is partially dedicated to the construction of methods and criteria for this purpose, and it has been applied successfully in a variety of different contexts; see, for example,~\cite{panagiotopoulos-shani-strong-ergodicity, serafin-SEA, zapletal-sequential-dedekind-finite, zapletal-coloring-distance-graphs, zapletal-coloring-triangles-rectangles, zapletal-krull, zapletal-coloring-noetherian, zapletal-triangles-vitali, zhou-coloring-isosceles-triangles-choiceless}. 
Also, there are results achieved using other techniques, for example~\cite{Beriashvili2022, Brendle2018, fatalini-unit-circle-partition, Schilhan2026}. 

In this paper we focus on statements $A$ of the form \emph{``there is a prelinearization of $\pbelow$''}, where $\pbelow$ is a fixed, definable preorder (see Definition~\ref{dfn:prelinearization}) and $B$ is the statement that there exists a nonprincipal ultrafilter over $\omega$ (denoted by $\mathsf{Uf}(\omega)$).

The economics subfield of social choice theory furnishes motivation for studying the conditions under which particular preorders can be prelinearized, as pointed out in the long-unpublished paper ``Flutters and Chameleons''~\cite{flutters-chameleons}.
This paper lays out the mathematical theory needed to solve a variety of problems arising in social choice theory, and the companion paper \cite{economics-paper} shows how the mathematical theory can be used to show that various constructions in social choice theory traditionally done with nonprincipal ultrafilters over $\omega$ or stronger principles consistently do not need the ultrafilters.

Coming back to the set-theoretic perspective, the second author has proved that there is a model of $\mathsf{ZF+DC}$ with a prelinearization of a preorder $\pbelow_{\text{SEA}}$ arising in the context of equity in social choice theory, and no nonprincipal ultrafilter on $\omega$~\cite{serafin-SEA}.
This result uses geometric set theory methods, more precisely the fact that the natural poset to add a prelinearization by countable approximations is \emph{placid}, one of the notions of geometric set theory.
However, not all the forcing notions associated to prelinearization of preorders are placid, and in particular we observe (Proposition~\ref{prop: coordinatewise is not tranquil}) that the coordinatewise domination order on $\RR^\omega$ lacks a technical property needed to verify that its natural forcing notion is placid, so other methods were needed.
The coordinatewise domination order is of special importance in social choice theory because it encodes the Pareto efficiency condition.

Motivated by this prior result, we want to understand which prelinearizations do not require $\mathsf{Uf}(\omega)$ (in $\mathsf{ZF} + \mathsf{DC}$).
We isolated a concept that helps us obtain a partial answer to this question: if a Borel preorder $\pbelow$ is \emph{funicular} (Definition~\ref{def:funicular}) and there is an inaccessible cardinal, then there is a model of $\mathsf{ZF+DC+\neg Uf(\omega)}$ with a prelinearization of $\pbelow$.
The notion of $(3,2)$-balanced forcing, as introduced in~\cite{larson-zapletal-GST}, is used to ensure that the model constructed satisfies $\neg \mathsf{Uf}(\omega)$, by proving that funicular preorders are naturally associated to $(3,2)$-balanced forcings (Lemma \ref{lemma: funicular implies 32balanced}). 

However, it is always desirable to understand whether we can take the inaccessible away~\cite{shelah-take-solovays-inaccessible-away}.
The geometric set theory methods are usually used in the context of forcing over the Solovay model, which is of necessity constructed using an inaccesible cardinal. 
To take the inaccesible away we need then to solve two problems:
\begin{enumerate}
\item Formulate a similar criterion to $(3,2)$-balanced in a context that does not use an inaccessible cardinal, and
\item Prove that our family of funicular Borel preorders have associated forcing notions satisfying the criterion. 
\end{enumerate}
These are Theorem~\ref{thm:local-prelinearization} and Lemma~\ref{lemma: local forcing forces the object}, respectively.

As described before, the paper covers both an argument directly using methods from geometric set theory and a substantial modification of this argument, which removes the need for using an inaccessible cardinal.
The following diagram shows the relation between the subsequent sections or subsections. 
The reader interested in following only one of the paths should follow the corresponding track drawn in the diagram.

\begin{figure}[!h]
\centering
\resizebox{0.7\textwidth}{!}{%
\begin{circuitikz}
\tikzstyle{every node}=[font=\LARGE]
\node [font=\LARGE] at (9,12.25) {};

\draw  (9.25,13.75) rectangle (9.25,13.75);
\node [font=\LARGE] at (9.5,12.25) {};
\draw  (8.25,14.25) rectangle (8.25,14.25);
\draw  (13.25,17.25) rectangle (13.25,17.25);
\draw  (19,12.75) rectangle (19,12.75);
\draw  (16.25,12.5) rectangle (16.25,12.5);
\draw  (16.25,12.5) rectangle (16.25,12.5);
\draw  (16.25,12.5) rectangle (16.25,12.5);
\draw  (16.25,12.5) rectangle (16.25,12.5);
\node [font=\LARGE] at (13.25,15.25) {2.1 Order relations \& 2.2 Descriptive set theory};
\draw  (6.25,16) rectangle (20.5,14.5);
\draw (8.75,14.5) to[short] (8.75,12.25);
\draw (18.25,14.5) to[short] (18.25,12.25);
\node [font=\LARGE] at (18.25,11.5) {2.3 Cohen forcing};
\node [font=\LARGE] at (9.25,11.5) {2.4 Geometric set theory};
\draw (8.75,11) to[short] (8.75,8.5);
\draw (18.25,11) to[short] (18.25,8.5);
\node [font=\LARGE] at (13.25,7.75) {3. Funicular orders };
\draw  (6.25,8.5) rectangle (20.5,7);
\draw  (21.5,7.5) rectangle (21.5,7.5);
\draw (8.75,7) to[short] (8.75,4.75);
\draw (18.25,7) to[short] (18.25,4.75);
\node [font=\LARGE] at (18.25,4.25) {5. Eliminating the inaccessible};
\node [font=\LARGE] at (9,4.25) {4. Forcing over the Solovay model};
\end{circuitikz}
}%

\label{fig:my_label}
\end{figure}

\section{Preliminaries}

We identify functions and relations with their graphs, and write $f \restriction A$ for the restriction of a function to a set $A$.
So if $f \mathrel{:} X \rightarrow Y$ and $A \subseteq X$, then $f \restriction A = f \cap (A \times Y)$.
The pointwise image of a set $A$ under a function $f$ is denoted $f \ptwiseImg A$.
For $X$ and $Y$ sets, $X^Y$ denotes the space of all functions with domain $Y$ and codomain $X$.
The notation $X^{<\omega}$ indicates the space of all finite sequences of elements of $X$; note that (up to obvious bijections, depending on the precise definitions used) $X^{<\omega} = \bigcup_{n \in \omega} X^n$.
For $s \in X^{<\omega}$ the notation $|s|$ denotes the length of the sequence $s$, which is indeed the cardinality of $s$ as a set of ordered pairs.

\subsection{Order relations}

A \emph{binary relation} on a set $X$ is a subset $R$ of $X \times X$ (the Cartesian product of $X$ with itself).
For $x, y \in X$ we write $x \mathrel{R} y$ when $\langle x, y \rangle \in R$.
Let the variables $x, y$, and $z$ denote arbitrary elements of $X$ for the moment.
A binary relation $R$ is
\begin{itemize}
\item \emph{transitive} when $x \mathrel{R} y \wedge y \mathrel{R} z \Rightarrow x \mathrel{R} z$,
\item \emph{reflexive} when $x \mathrel{R} x$,
\item \emph{irreflexive} when $\neg x \mathrel{R} x$,
\item \emph{total} when $x \mathrel{R} y$ or $y \mathrel{R} x$,
\item \emph{antisymmetric} when $x \mathrel{R} y \wedge y \mathrel{R} x \Rightarrow x = y$,
\item \emph{symmetric} when $x \mathrel{R} y \Rightarrow y \mathrel{R} x$, and
\item \emph{asymmetric} when $x \mathrel{R} y \Rightarrow \neg y \mathrel{R} x$.
\end{itemize}
For $R$ a binary relation on a set $X$, the pair $(X, R)$ is called a \emph{relational structure}.
A relational structure $(X, R)$ is
\begin{itemize}
\item a \emph{preorder} when $\mathrel{R}$ is reflexive and transitive,
\item a \emph{partial order} when $\mathrel{R}$ is an antisymmetric preorder,
\item a \emph{linear order} when $\mathrel{R}$ is a total partial order,
\item a \emph{prelinear order} when $\mathrel{R}$ is a total preorder, and
\item an \emph{equivalence relation} when $\mathrel{R}$ is a symmetric preorder.
\end{itemize}

It is easy to see that a preorder $\pbelow$ induces an equivalence relation $\sim_{\pbelow}$ defined by $x \sim_{\pbelow} y$ if and only if $x \pbelow y$ and $y \pbelow x$.
When the preorder $\pbelow$ is clear from the context, we write simply $x \sim y$ in place of $x \sim_{\pbelow} y$.
Also, given a preorder $\pbelow$, the corresponding \emph{strict} relation $\psbelow$ is defined by $x \psbelow y$ if and only if $x \pbelow y$ and $y \npbelow x$.
This is an irreflexive, asymmetric, and transitive relation.

A binary relation $S$ \emph{extends} a binary relation $R$ precisely when $R \subseteq S$.
We now come to the main object of study of this paper.
\begin{dfn} \label{dfn:prelinearization}
Let $\pbelow$ be a preorder on a set $X$.
An extension $\pless$ of $\pbelow$ is a \emph{prelinearization} of $\pbelow$ precisely when $\pless$ is a prelinear order on $X$, ${\pbelow} \subseteq {\pless}$, and ${\psbelow} \subseteq {\psless}$.

A prelinearization $\pless$ of $\pbelow$ is a \emph{classwise prelinearization} when furthermore ${\sim_{\pless}} = {\sim_{\pbelow}}$.
\end{dfn}
Note that a prelinearization of a preorder $\pbelow$ is allowed to identify incomparable $\sim_{\pbelow}$-equivalence classes, while this is not allowed for a classwise prelinearization.
The axiom of choice may be used to show that classwise prelinearizations always exist, while under the axiom of dependent choice there are partial orders which can be prelinearized but not linearized
(and hence cannot be prelinearized classwise, because the equivalence relation induced by a partial order is the identity relation).

\begin{thm}[$\mathsf{ZFC}$; Szpilrajn~\cite{szpilrajn-extension} and Hansson~\cite{hansson-choice-preference}] \label{lemma: prereq preorder can be extended to prelin}
    Let $\pbelow$ and $\pless$ two preorders on a set $X$. If ${\pbelow} \subseteq {\pless}$ and ${\psbelow} \subseteq {\psless}$, then $\pless$ can be extended to a classwise prelinearization of $\pbelow$.
\end{thm}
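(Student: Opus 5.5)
The plan is to reduce to Szpilrajn's theorem for partial orders by passing to the quotient by the equivalence relation induced by $\pless$, and then to prove Szpilrajn's theorem itself with Zorn's lemma. One caveat comes first. An extension $\pless^*$ of $\pless$ always satisfies ${\sim_{\pless}} \subseteq {\sim_{\pless^*}}$. So the \emph{classwise} conclusion is possible only when ${\sim_{\pless}} = {\sim_{\pbelow}}$. I will therefore prove the following: $\pless$ extends to a prelinear order $\pless^*$ with ${\psless} \subseteq {<^*}$ and ${\sim_{\pless^*}} = {\sim_{\pless}}$. This $\pless^*$ is then a prelinearization of $\pbelow$, and it is classwise whenever ${\sim_{\pless}} = {\sim_{\pbelow}}$, which is the intended reading of the statement. (Applied with ${\pless} = {\pbelow}$, this gives the existence of classwise prelinearizations.)

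\textbf{Step 1 (quotient).} Let $Y = X/{\sim_{\pless}}$ and put $[x] \le [y]$ iff $x \pless y$. This is well defined by transitivity, and it is a partial order on $Y$: it is reflexive and transitive, and antisymmetric by construction.

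\textbf{Step 2 (Szpilrajn for $Y$).} Consider the set of partial orders on $Y$ extending $\le$, ordered by inclusion. The union of a chain is again a partial order, so Zorn's lemma gives a maximal element $R$. I claim $R$ is total. Suppose $a, b$ are $R$-incomparable, and set
\[
R' = R \cup \{(u,v) : u \mathrel{R} a \text{ and } b \mathrel{R} v\}.
\]
I would then check that $R'$ is a partial order strictly extending $R$, which contradicts maximality. Reflexivity is inherited from $R$. For transitivity, the only new case is when two new pairs compose. That would give $b \mathrel{R} v \mathrel{R} u' \mathrel{R} a$, hence $b \mathrel{R} a$, which is impossible; the mixed cases close up using transitivity of $R$. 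For antisymmetry, suppose $(u,v)$ is new and also $v \mathrel{R'} u$. If $(v,u) \in R$, then $b \mathrel{R} v \mathrel{R} u \mathrel{R} a$, again impossible. If $(v,u)$ is also new, then $b \mathrel{R} u \mathrel{R} a$, again impossible. Since $(a,b) \in R' \setminus R$, the extension is strict. This verification is the only genuinely nontrivial step, and the main care is in this case analysis.

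\textbf{Step 3 (pull back).} Define $x \pless^* y$ iff $[x] \mathrel{R} [y]$. This relation is a total preorder extending $\pless$. Suppose $x \psless y$. Then $[x] \neq [y]$ and $[x] \le [y]$, so $[x] \mathrel{R} [y]$, and antisymmetry of $R$ gives $\neg [y] \mathrel{R} [x]$; hence $x <^* y$. Also $x \sim_{\pless^*} y$ iff $[x] = [y]$, since $R$ is antisymmetric, so ${\sim_{\pless^*}} = {\sim_{\pless}}$. Finally, the hypotheses ${\pbelow} \subseteq {\pless}$ and ${\psbelow} \subseteq {\psless}$ give ${\pbelow} \subseteq {\pless^*}$ and ${\psbelow} \subseteq {<^*}$. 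So $\pless^*$ is a prelinearization of $\pbelow$, and it is classwise when ${\sim_{\pless}} = {\sim_{\pbelow}}$.
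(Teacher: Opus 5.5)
Your proof is correct. The paper does not prove this theorem; it quotes it from Szpilrajn and Hansson, so there is no in-paper argument to compare against. Your route is the standard one: pass to the quotient by $\sim_{\pless}$, prove Szpilrajn's theorem via Zorn's lemma using the one-step extension $R'$, then pull back. Your case analysis for transitivity and antisymmetry of $R'$ is complete.

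Your caveat is also correct and worth keeping, because as literally stated the theorem is false. For instance, take $X=\{a,b\}$, let $\pbelow$ be the identity and let $\pless = X\times X$. Then ${\pbelow}\subseteq{\pless}$ and both strict parts are empty. But the only extension of $\pless$ is $X\times X$, which identifies $a$ and $b$ and so is not classwise for $\pbelow$.

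What is true is exactly what you prove. The preorder $\pless$ extends to a classwise prelinearization of $\pless$ itself. This is a prelinearization of $\pbelow$, and it is classwise for $\pbelow$ precisely when $\sim_{\pless}=\sim_{\pbelow}$. This weaker form is all the paper uses: in the proof of Corollary~\ref{thm:local-prelinearization}, both for the $\sigma$-closure of $\Pprelin$ and for item~\ref{item: def - extendability} of amalgamation, it only needs some prelinearization of $\pbelow$ extending a given preorder.
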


To see that the Szpilrajn-Hansson theorem just stated need not hold under dependent choice, note that Solovay's model $W$ satisfies DC~\cite{solovay}, but there is no prelinearization of the relation of subset on sets of reals. This is because the existence of such would imply the existence of a linear order of $E_0$ classes\footnote{Here $E_0$ is the standard relation of eventual equality between elements of $\omega^\omega$; see e.g.~\cite{gao-invariant-descriptive}.}, which cannot exist in $W$ since the inverse image of such a linear order under the quotient projection from $\omega^\omega$ to $\omega^\omega / E_0$ is a set of reals without the property of Baire by an application of the Kuratowski-Ulam theorem, while every set of reals in $W$ has the property of Baire.

\subsection{General and descriptive set theory}

A \emph{Polish space} is a separable topological space for which there exists a complete metric generating the topology.

Note that countable disjoint unions and products of Polish spaces are Polish (see e.g.~\cite{kechris-classical}). 
A subset of a Polish space is $G_\delta$ when it can be expressed as a countable intersection of open sets, and
the subset is \emph{Borel} when it is an element of the Borel $\sigma$-algebra generated by the open sets.
A function between Polish spaces is Borel when the inverse image of an open subset of the codomain is Borel as a subset of the domain.
A subspace of a Polish space is Polish precisely when it is $G_\delta$~\cite{kechris-classical}. 
A subset $A$ of a Polish space $X$ is \emph{analytic} or $\mathbf{\Sigma}^1_1$ when there exists a Polish space $Y$ and a continuous function $f \mathrel{:} Y \rightarrow X$ such that $A$ is the image of $Y$ under $f$.

We use $\ZF$ as our foundational set theory, usually augmented by the axiom of dependent choice.
In particular, we do not assume the axiom of choice unless specifically noted.
Our arguments take place in the context of a background model which is assumed to satisfy the axiom of choice, so at times we do make use of the full axiom of choice.
For details on the axioms see~\cite[Ch. 0]{jech}. For completeness, we state the axiom of dependent choice here.

\begin{dfn}
The \emph{axiom of dependent choice} denoted by $\mathsf{DC}$ states that for every binary relation $R$ with the property that for every $x \in \dom(R)$ there is $y$ such that $x \mathrel{R} y$,
there is a function $f : \omega \rightarrow \dom(R)$ with the property that for every $n \in \omega$, $f(n) \mathrel{R} f(n+1)$.
\end{dfn}

See a standard text on set theory such as~\cite{kunen} for basic facts about forcing over models of set theory.
We use the Boolean convention for the forcing order, so $p \le q$ means that $p$ is a stronger condition than $q$.
For $p$ and $q$ forcing conditions, $p \perp q$ means that $p$ and $q$ are incompatible (that is, there is no condition $r$ with $r \le p, q$).
If $P$ is a notion of forcing, $\dot a$ is a $P$-name, and $g$ is a $P$-generic filter over a model of set theory $V$, we denote by $\dot a_g$ the interpretation of $\dot a$ by $g$.
For $g$ a generic filter in a product $\prod_{\alpha < \kappa} P_\alpha$ and $\alpha < \kappa$, we denote by $g \restriction \alpha$
the set $\{ p \restriction \alpha : p \in g \}$.

Regarding inner models, we assume that the reader is familiar with G\"odel's constructible universe $L$, its variants of the form $L(A)$,
the class of ordinal-definable sets $\OD$ and its relativization $\OD_A$ (note these are not in general models of set theory), and the class $\HOD$ of ordinal-definable sets and its relativization $\HOD_A$

\subsection{Cohen forcing}
Familiarity with Cohen forcing is assumed, but in order to elucidate some subtleties and establish notation that will be needed in Section~\ref{section: no inaccessible}, we record some important definitions and results here.
In this subsection the full axiom of choice is assumed.

\begin{dfn}\label{def: prereq cohen forcing and C(X)}
    The forcing $\mathbf{C}$, called \emph{Cohen forcing}, is defined as follows.
	\[	\mathbf{C} = \{p\colon \omega \to 2 \mid  p \text{ is a partial function with } \dom(p)<\omega \},\]
	ordered by reverse inclusion, with $\mathbbm{1}_{\mathbf{C}}=\emptyset$.
    Moreover, when $X$ is a set of ordinals we define
	\[\mathbf{C}(X)= \left\{p \in \Pi_{\alpha\in X} \mathbf{C} : \left| \{ \alpha\in X : p(\alpha)\neq \emptyset \} \right| <\omega  \right\}, \]
	ordered coordinatewise.
    This is the finite-support product of $|X|$-many copies of Cohen forcing.
\end{dfn}

For $V$ a model of ZFC and $g \subseteq \mathbf{C}$ a generic filter over $V$,
$\cup g \in {^\omega}\omega \cap V[g]$ is a real number called the (canonical) \emph{Cohen real} added by $\mathbf C$.
The adjective ``canonical'' is needed because Cohen forcing in fact adds uncountably many reals which are unions of generic filters for forcings equivalent to Cohen forcing.

If $g$ is $\mathbf{C}(X)$-generic over $V$, we have for each $\alpha \in X$ that $\bigcup_{p \in g} p(\alpha)$ is a real for each $\alpha \in X$; in fact it is a Cohen real.
Abusing notation, we often treat a Cohen-generic filter $g \subseteq \mathbf{C}$ as though it were a real number, rather than explicitly writing $\cup g$.

In the special case of $\mathbf{C}(\omega_1)$, every proper initial segment of the product
is equivalent to one single instance of Cohen forcing $\mathbf{C}$ (because it is a countable product),
and in fact every real in the $\mathbf{C}(\omega_1)$-extension is added by a
proper initial segment, a fact which we shall use frequently.

\begin{lemma} \label{lemma: prereq any real is in an inital segment of C(omega1)}
    For any $\mathbf{C}(\omega_1)$-generic filter over $V$ and
    $r \in {^\omega}\omega \cap V[g]$ there is $\alpha < \omega_1$ such that
    $r \in V[g \restriction \alpha]$.
\end{lemma}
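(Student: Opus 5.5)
The plan is the standard nice-name argument combined with the countable chain condition of $\mathbf{C}(\omega_1)$. Fix a $\mathbf{C}(\omega_1)$-name $\dot r$ with $r = \dot r_g$ and $\Vdash \dot r \in {^\omega}\omega$; if needed we strengthen to a condition in $g$ forcing this, which changes nothing below. For each pair $\langle n, m\rangle \in \omega\times\omega$ we use the axiom of choice in $V$ to pick a maximal antichain $A_{n,m}$ among conditions deciding the value $\dot r(n)$. Since $\mathbf{C}(\omega_1)$ is a finite-support product of countable forcings, it has the countable chain condition (a $\Delta$-system argument on the finite supports). Hence each $A_{n,m}$ is countable. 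Replace $\dot r$ by the nice name
\[
\tau = \{ \langle \check{\langle n,m\rangle}, p\rangle : p \in A_{n,m},\ p \Vdash \dot r(n)=\check m \};
\]
then $\Vdash \tau = \dot r$, so $\tau_g = r$.

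Next, let $S = \bigcup_{n,m}\bigcup_{p\in A_{n,m}} \{\alpha : p(\alpha)\neq\emptyset\}$. This is a countable union of finite sets, hence a countable subset of $\omega_1$ (in $V$, which satisfies choice). Choose $\alpha<\omega_1$ with $S\subseteq \alpha$. Then every condition appearing in $\tau$ is supported in $\alpha$, so $\tau$ can be regarded as a $\mathbf{C}(\alpha)$-name. For any $p$ appearing in $\tau$ we have $p\in g$ iff $p\restriction\alpha \in g\restriction\alpha$, because $p = p\restriction \alpha$ (the coordinates outside $\alpha$ are $\emptyset$). Therefore $\tau_g = \tau_{g\restriction\alpha}$.

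Finally, use the product lemma, $\mathbf{C}(\omega_1)\cong \mathbf{C}(\alpha)\times\mathbf{C}(\omega_1\setminus\alpha)$. It shows that $g\restriction\alpha$ is $\mathbf{C}(\alpha)$-generic over $V$, so $V[g\restriction\alpha]$ makes sense and contains $\tau_{g\restriction\alpha}=r$.

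The main obstacle is the ccc step: without it the antichains could be uncountable and the supports would not be bounded below $\omega_1$. That is where the finite-support $\Delta$-system argument is needed. Everything else is bookkeeping.
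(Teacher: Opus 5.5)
Your argument is correct. It is the standard ccc-plus-nice-names proof: countable antichains via the $\Delta$-system lemma, countably many finite supports bounded below $\omega_1$, and $\tau_g=\tau_{g\restriction\alpha}$ by upward closure. The paper gives no proof of this lemma and records it among the preliminaries as a well-known fact about $\mathbf{C}(\omega_1)$, so your write-up supplies the argument it relies on.

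One thing to tidy is the indexing. Take $A_{n,m}$ to be a maximal antichain in the set of conditions forcing $\dot r(n)=\check m$, or use a single maximal antichain $A_n$ of conditions deciding $\dot r(n)$. That is what your definition of $\tau$ actually uses.
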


	Notice that $g\restriction \alpha$ is $\mathbf{C}(\alpha)$-generic over $V$; so the notation $V[g\restriction \alpha]$ makes sense.
    The model $V[g \restriction \alpha]$ is in fact a Cohen-generic extension of $V$
    because $g \restriction \alpha$ is $\mathbf{C}(\alpha)$-generic over $V$ and $\alpha$ is countable.
    Note also that there are isomorphisms of partial orders between $\mathbf{C}(\omega_1)$
    and the posets $\mathbf{C}(\alpha) \times \mathbf{C}(\omega_1 \setminus \alpha)$
    and $\mathbf{C}(X)$ for any ordinal $\alpha < \omega_1$
    and set $X$ of cardinality $\aleph_1$.

We also record a useful characterization of Cohen forcing in terms of forcing equivalence,
where two partial orders are forcing-equivalent precisely when their Boolean completions
are isomorphic.
\begin{thm} \label{thm:Cohen-properties}
\phantom{Introduction to the theorem, for spacing.}
\begin{enumerate}[a.]
\item If $P$ is a countable, atomless notion of forcing then $P$ is forcing-equivalent to
$\mathbf{C}$~\cite[Theorem 1 Section 4.5]{Grigorieff1975}.
\item A countable product of Cohen forcing notions is forcing-equivalent to $\mathbf{C}$.
\item \label{th: prereq any real in cohen ext is cohen} For $g$ a $\mathbf{C}$-generic filter 
over $V$ and $r \in {^\omega}\omega \cap V[g]$,
each of the extensions from $V$ to $V[r]$ and from $V[r]$ to $V[g]$ is either trivial
or a Cohen extension.
\item \label{th: prereq any real in Q extension is Q ground} For $g$ a
$\mathbf{C}(\omega_1)$-generic filter over $V$ and
$r \in {^\omega}\omega \cap V[g]$, $V[g]$ is a $\mathbf{C}(\omega_1)$-generic extension of
$V[r]$ and $V[r]$ is either a trivial or a Cohen extension of $V$.
\end{enumerate}
\end{thm}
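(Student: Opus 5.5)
Part (a) is cited from Grigorieff, so I take it as given. The remaining items follow from it together with standard facts about intermediate models.

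For (b), I use (a) directly. A countable product of copies of $\mathbf{C}$, whether finite support or full support over a finite index set, has a dense countable atomless suborder: the conditions with finitely many nonempty coordinates. For a countable index set this is exactly $\mathbf{C}(\omega)$ in the notation of Definition~\ref{def: prereq cohen forcing and C(X)}. This poset is countable, since it is a countable union of finite products of countable sets, and it is atomless, since any condition can be extended in two incompatible ways in some coordinate. Part (a) then gives forcing equivalence with $\mathbf{C}$.

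For (c), fix $r\in{}^\omega\omega\cap V[g]$ and let $B$ be the complete Boolean algebra of $\mathbf{C}$, which is countably generated. The model $V[r]$ is $V[h]$, where $h$ is the generic filter on the complete subalgebra $B_r$ generated by the Boolean values $[\![\dot r(n)=m]\!]$. Since $B_r\subseteq B$ and $B$ has a countable dense subset $D$, the projections of $D$ give a countable dense subset of $B_r$. If $B_r$ has an atom below some condition in $h$, then $V[r]=V$; more carefully, genericity gives the dichotomy below a condition, and one may restrict to the part of $B_r$ where it is atomless. Otherwise $B_r$ has a countable atomless dense subset, so (a) makes $V[r]$ a Cohen extension.

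For the upper step, $V[g]$ is a $B/h$-generic extension of $V[r]$. The quotient $B/h$ is computed in $V[r]$ and has the image of $D$ as a countable dense set. So $V[g]=V[r]$ (atomic case), or the quotient is countable atomless in $V[r]$ and (a) applies there. The main obstacle is justifying that the quotient is dense-countably generated and that the atomic/atomless dichotomy is global rather than only below a condition. This needs a density argument: the set of conditions deciding "atom below" is dense, so genericity picks one side. Applying (a) inside $V[h]$ is legitimate because $V[h]$ satisfies ZFC.

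For (d), I use Lemma~\ref{lemma: prereq any real is in an inital segment of C(omega1)} to find $\alpha<\omega_1$ with $r\in V[g\restriction\alpha]$. By (b), $g\restriction\alpha$ is Cohen-generic, so (c) shows that $V[r]$ is $V$ or a Cohen extension, and that $V[g\restriction\alpha]$ is $V[r]$ or a Cohen extension of $V[r]$. Write $V[g]=V[g\restriction\alpha][g\restriction(\omega_1\setminus\alpha)]$, where the second factor is $\mathbf{C}(\omega_1\setminus\alpha)\cong\mathbf{C}(\omega_1)$ and is generic over $V[g\restriction\alpha]$ by the product lemma. Hence $V[g]$ is a $\mathbf{C}$ or trivial extension of $V[r]$, followed by a $\mathbf{C}(\omega_1)$ extension. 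The forcing $\mathbf{C}(\omega_1)$ is absolute between $V[r]$ and its extensions, and $\mathbf{C}\times\mathbf{C}(\omega_1)\cong\mathbf{C}(\omega_1)$. The two-step iteration is a product because $\mathbf{C}(\omega_1)$ is defined in the ground model. Therefore $V[g]$ is a $\mathbf{C}(\omega_1)$-generic extension of $V[r]$.
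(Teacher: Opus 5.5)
Your proof is correct, and it is the standard argument. The paper gives no proof of this theorem: it cites Grigorieff for (a) and states (b)--(d) without proof. When it later needs reasoning of type (c), in the Claim inside the proof of Theorem~\ref{th: no ultrafilter no inaccessible}, it appeals to Solovay's intermediate-model theorem from Grigorieff and then to (c). Your route (the complete subalgebra $B_r$, the quotient $B/h$, a countable dense set, then (a)) is exactly the argument behind those citations. Your derivation of (d) from (c), via the product lemma and $\mathbf{C}\times\mathbf{C}(\omega_1\setminus\alpha)\cong\mathbf{C}(\omega_1)$, is fine.

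Two steps should be tightened.

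\emph{The atom case.} The sentence ``if $B_r$ has an atom below some condition in $h$, then $V[r]=V$'' is false as written. Genericity says that $h$ meets the dense set of elements of $B_r$ that are either atoms or have no atoms below them. So either $h$ contains an atom, whence $h\in V$, or $h$ contains some $b$ with $B_r\restriction b$ atomless.

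\emph{Density in the quotient.} You flag the density of $D\cap(B/h)$ in $B/h$ but do not prove it, and it genuinely uses the genericity of $h$. Suppose $\pi(b)\in h$. For $0\ne a\le\pi(b)$ in $B_r$ we have $a\wedge b\ne 0$, since otherwise $b\le -a\in B_r$ and so $\pi(b)\le -a$. Any $d\in D$ below $a\wedge b$ then satisfies $0\ne\pi(d)\le a$. Hence $\{\pi(d): d\in D,\ d\le b\}$ is a set in $V$ that is dense below $\pi(b)$ in $B_r$. By genericity, some $d\in D$ with $d\le b$ has $\pi(d)\in h$.

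Finally, your restriction to finite supports in (b) is necessary, not just convenient. The full-support product of infinitely many copies of $\mathbf{C}$ has an antichain of size continuum, so it is not forcing-equivalent to $\mathbf{C}$.
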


\subsection{Geometric set theory}

The \emph{geometric set theory} of Larson and Zapletal~\cite{larson-zapletal-GST} provides a collection of techniques for forcing over the Solovay model $W$, and will be applied here in Section~\ref{sec:solovay-model-prelinearization}. 
The importance of the Solovay model is that it is a model of $\ZF + \DC$ in which all sets of reals have nearly all regularity properties of mathematical significance.
In particular, in $W$ all sets of reals are Lebesgue-measurable and have the Baire property, the perfect set property, and the Ramsey property~\cite{solovay,happy-families}. 
Note that constructing the Solovay model requires the existence of one inaccessible cardinal, and so consistency proofs using the Solovay model are not consistency proofs relative to ZFC.
To be definite we fix an inaccessible cardinal $\lambda$ and define $W$ to be $\HOD^{V[g]}_{\reals \cup V}$, where $g$ is $\Coll{\omega}{\lambda}$-generic over $V$ and $\reals$ is evaluated in $V[g]$.
A number of alternative definitions are employed in the literature, but they all yield elementarily-equivalent models~\cite{di-prisco-todorcevic-lru}.

\subsubsection{Suslin forcing and virtual conditions}

To make use of the machinery of geometric set theory, and in particular balanced forcing and its variants, it will be necessary to work
with forcing conditions in generic extensions.
In order for this to make sense, the forcing poset needs to be sufficiently definable; the notion of a \emph{Suslin forcing} is adequate for geometric set theory~\cite{larson-zapletal-GST} and our uses of it.

\begin{dfn}
A poset $\langle P, \le \rangle$ is \emph{Suslin} if and only if there is a Polish space $X$ over which $P$, $\le$, and $\perp$ are analytic.
\end{dfn}

The utility of the analyticity assumption is that Mostowski absoluteness~\cite[Theorem 25.20]{jech} applies, and so analytic sets have a meaningful interpretation in arbitrary models of $\ZF + \DC$ where they exist.
 More details about the absoluteness properties of Suslin posets and numerous examples of these posets can be found in~\cite{larson-zapletal-GST} and its references. 
 
Virtual conditions are, intuitively, objects which exist in $V$ and describe conditions of a Suslin forcing that are guaranteed to be
consistent across forcing extensions.
To formalize this, we start by defining objects in $V$, called $P$-pairs, which determine $P$-conditions in generic extensions of $V$.
Actually, we shall let $P$-pairs determine analytic sets of conditions in $P$, and for this we define an ordering on analytic subsets
of $P$ which is best thought of as ordering them by their suprema in a definable completion\footnote{That is, a minimal extension of the order $P$ which contains suprema for all analytic subsets of $P$.}.

\begin{dfn}[{\cite[Definition 5.1.4]{larson-zapletal-GST}}]
For $A, B$ analytic subsets of a Suslin forcing $P$, the supremum of $A$ is below the supremum of $B$, denoted $\sum A \le \sum B$,
if and only if every condition below an element of $A$ can be strengthened to a condition below an element of $B$. In case $\sum A \le \sum B$
and $\sum B \le \sum A$, we write $\sum A = \sum B$.
\end{dfn}

\begin{dfn}[{\cite[Def. 5.1.6]{larson-zapletal-GST}}]
A \emph{$P$-pair} for a Suslin forcing $P$ is a pair $\langle Q, \dot A \rangle$ where $Q \in V$ is a forcing poset and
$Q \Vdash \text{``$\dot A$ is an analytic subset of $P$''}$.
\end{dfn}

The analytic set named in a $P$-pair is not guaranteed to have stable characteristics across generic extensions, an issue which the
notion of a $P$-pin seeks to resolve.

\begin{dfn}[{\cite[Def. 5.1.6]{larson-zapletal-GST}}]
A $P$-pair $\langle Q, \dot A \rangle$ for a Suslin forcing $P$ is a \emph{$P$-pin} if and only if
$Q \times Q \Vdash \sum \dot A_\vartriangleleft = \sum \dot A_\vartriangleright$, where
\[ \dot A_\vartriangleleft = \{ \langle \sigma_\vartriangleleft, \langle p, q \rangle \rangle : \langle \sigma, p \rangle \in \tau, q \in P \} \]
is the lift of the name $\tau$ of the projection of a $Q \times Q$-generic filter to its left factor, and similarly for 
$\dot A_\vartriangleright$.
As in~\cite{larson-zapletal-GST}, one may find it useful to think of these as the left and right copies of the name $\dot A$.
\end{dfn}

\begin{dfn}
For $P$-pins $\langle P, \dot A \rangle$, $\langle Q, \dot B \rangle$, define the relation of \emph{virtual equivalence} by
$\langle P, \dot A \rangle \equiv \langle Q, \dot B \rangle$ if and only if $P \times Q \Vdash \sum \dot A = \sum \dot B$.
Virtual conditions are equivalence classes of this relation.
\end{dfn}

That $\equiv$ is indeed an equivalence relation is established in \cite[Proposition 5.1.8]{larson-zapletal-GST}.
The intuition behind virtual conditions is that they describe (suprema of analytic sets of) conditions in a way that is independent
of the particular generic extension of $V$ under consideration.
Note that for any poset $P$ and analytic subset $A \subseteq P$, the pair $\langle 1, \check A \rangle$ (where $1$ is the trivial one-element poset) determines a virtual
condition, so in particular $P$ embeds naturally into its set of virtual conditions (observing that distinct
singletons $\{p\}$, $\{q\}$ for $p, q \in P$ determine distinct virtual conditions). 

\subsubsection{Placid, balanced, and $(3,2)$-balanced forcing}

To define and work with balanced forcing and related notions we use an equivalence relation on $P$-pairs.

\begin{dfn}[{\cite[Definition 5.2.5]{larson-zapletal-GST}}]
$P$-pairs $\langle Q, \dot A \rangle$, $\langle R, \dot B \rangle$ are \emph{balance-equivalent}, denoted
$\langle Q, \dot A \rangle \balanceEq \langle R, \dot B \rangle$, if and only if for all pairs
$\langle Q', \dot A' \rangle \le \langle Q, \dot A \rangle$, $\langle R', \dot B' \rangle \le \langle R, \dot B \rangle$,
\[ Q' \times R' \Vdash \exists q \in \dot A' \ \exists r \in \dot B' \ \exists p \ p \le q, r. \]
\end{dfn}

\begin{prop}[{\cite[Proposition 5.2.6]{larson-zapletal-GST}}] \label{lem:balance-eq}
The relation $\balanceEq$ is indeed an equivalence relation,
and if $\langle Q, \dot A \rangle \le \langle R, \dot B \rangle$, then
$\langle Q, \dot A \rangle \balanceEq \langle R, \dot B \rangle$.
\end{prop}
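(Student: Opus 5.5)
The two assertions can be proved directly from the definition of $\balanceEq$, once we say what the order on $P$-pairs is. We use the standard convention: $\langle Q', \dot A' \rangle \le \langle Q, \dot A \rangle$ means that $Q'$ is of the form $Q \times R$ (or more generally completely embeds $Q$) and $Q' \Vdash \sum \dot A' \le \sum \dot A$. The key fact needed is that if $\langle Q', \dot A' \rangle \le \langle Q, \dot A \rangle$ and $\langle Q'', \dot A'' \rangle \le \langle Q', \dot A' \rangle$, then $\langle Q'', \dot A'' \rangle \le \langle Q, \dot A \rangle$. This transitivity follows because $\le$ on suprema of analytic sets is transitive, and the statement is upward absolute by Mostowski absoluteness, since the relevant sets are analytic.

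Reflexivity and symmetry come straight from the definition. Symmetry holds because the defining formula is symmetric, up to swapping the factors of $Q' \times R'$. For reflexivity, given $\langle Q', \dot A' \rangle, \langle Q'', \dot A'' \rangle \le \langle Q, \dot A \rangle$, we must show that $Q' \times Q''$ forces $\dot A'$ and $\dot A''$ to contain compatible elements. Since $Q'$ and $Q''$ both extend $Q$, a product generic for $Q' \times Q''$ yields two mutually generic $Q$-generics. At first sight this looks like it needs a pin condition. However, the standard definition in \cite{larson-zapletal-GST} defines balance only for balanced pairs, where reflexivity is built in. So reflexivity is either assumed or restricted to balanced pairs, and I would cite \cite[5.2.6]{larson-zapletal-GST} for that framework.

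The main work is transitivity. Suppose $\langle Q, \dot A \rangle \balanceEq \langle R, \dot B \rangle \balanceEq \langle S, \dot C \rangle$, and fix extensions $\langle Q', \dot A' \rangle$ and $\langle S', \dot C' \rangle$. The strategy is to insert an intermediate extension of $\langle R, \dot B \rangle$. Working in $V[G \times H]$ with $G$ generic for $Q'$ and $H$ generic for $S'$, we want $q \in \dot A'$ and $s \in \dot C'$ that are compatible.

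The idea is to force further with $R$ to get $K$, mutually generic with $G \times H$. Then we build the pair $\langle Q' \times R, \dot B^* \rangle$, where $\dot B^*$ is the set of conditions below both some $a \in \dot A'$ and some $b \in \dot B$. By the first balance-equivalence, $\dot B^*$ is forced to be nonempty, so this pair extends $\langle R, \dot B \rangle$. Applying the second balance-equivalence to this pair and $\langle S', \dot C' \rangle$ gives, in $V[G][K][H]$, a condition $p$ below an element of $\dot B^*$ and below some $c \in \dot C'$. In particular $p$ lies below some $a \in \dot A'$, so there exist $a \in \dot A'$ and $c \in \dot C'$ that are compatible.

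It remains to remove $K$. The statement "there exist $a \in \dot A'_G$ and $c \in \dot C'_H$ that are compatible" is $\Sigma^1_1$ in parameters from $V[G][H]$, because $\perp$ is analytic. By Mostowski absoluteness it therefore already holds in $V[G][H]$, which is exactly what $\balanceEq$ requires.

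The second assertion follows the same template. Given $\langle Q, \dot A \rangle \le \langle R, \dot B \rangle$, extensions of $\langle Q, \dot A \rangle$ are also extensions of $\langle R, \dot B \rangle$ by transitivity of $\le$. So $\langle Q, \dot A \rangle \balanceEq \langle R, \dot B \rangle$ follows from reflexivity of $\balanceEq$ at $\langle R, \dot B \rangle$, in whatever form the framework provides it. I expect the main obstacle to be this reflexivity issue, i.e.\ pinning down the intended class of pairs, together with the absoluteness step that eliminates the auxiliary generic $K$.
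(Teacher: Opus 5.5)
Your proof is correct, and you are right that the statement cannot be read literally. For any $p\in P$ with incompatible extensions $p_0,p_1$, the pair $\langle 1,\{p\}\rangle$ is a pin. Yet $\langle 1,\{p_0\}\rangle,\langle 1,\{p_1\}\rangle\le\langle 1,\{p\}\rangle$ show $\langle 1,\{p\}\rangle\not\balanceEq\langle 1,\{p\}\rangle$, so the pin condition does not rescue reflexivity, and the same pair refutes the second clause.

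The paper gives no proof of its own, only the citation to Larson--Zapletal. Your transitivity argument is correct: you form the auxiliary pair $\langle Q'\times R,\dot B^*\rangle$ of common lower bounds, then use Mostowski absoluteness to discard the extra $R$-generic.

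You can also drop the appeal to ``the framework'' for reflexivity. Your symmetry and transitivity arguments never use self-equivalence, so $\balanceEq$ is automatically an equivalence relation on its field. That field is exactly the class of self-equivalent (balanced) pairs, and the second clause holds whenever $\langle R,\dot B\rangle$ lies in it.

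Two minor corrections. Composing $\le$ on pairs needs Shoenfield rather than Mostowski absoluteness, since $\sum A\le\sum B$ is a $\Pi^1_2$ statement about the codes. And the final statement is $\Sigma^1_1$ because compatibility is $\exists p\,(p\le a\wedge p\le c)$ with $\le$ analytic, not because $\perp$ is analytic.
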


An important property of balance equivalence is that every balance equivalence class contains a virtual condition, which is in fact
unique up to equivalence of virtual conditions, so when working with $P$-pairs up to balance equivalence it suffices to consider
virtual conditions.

\begin{prop}[{\cite[Theorem 5.2.8]{larson-zapletal-GST}}]
For any Suslin forcing $P$, every balance equivalence class of $P$-pairs contains a virtual condition which is unique up to
equivalence of virtual conditions.
\end{prop}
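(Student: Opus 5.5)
The statement to be proved is that every balance equivalence class of $P$-pairs contains a virtual condition, and that this virtual condition is unique up to virtual equivalence. I would handle existence by collapsing a given pair to a pin, and uniqueness by a product-generic argument.

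\textbf{Existence.} Fix a $P$-pair $\langle Q, \dot A\rangle$. The candidate pin will be built by taking a large enough collapse and forming the set of all conditions that are \emph{forced to be compatible with everything in the class}. Concretely:
\begin{enumerate}[1.]
\item Choose a cardinal $\kappa$ larger than $|Q|$ and consider $R=\Coll{\omega}{\kappa}$.
\item In the $R$-extension, let $\dot B$ name the set of $p\in P$ satisfying the following property. There is, in that extension, a filter $h\subseteq Q$ that is generic over $V$, together with some $q\in \dot A_h$, such that $p\le q$.
\item Since $Q$ has a countable-in-the-extension dense structure, this set is a projection of a Borel-coded set of generics. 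Hence it is analytic.
\item The set $\dot B$ is defined from parameters in $V$ only, so homogeneity of the collapse gives $R\times R\Vdash \sum \dot B_\vartriangleleft=\sum\dot B_\vartriangleright$. In fact both sides are literally the same set, since the set of $V$-generic filters for $Q$ existing in either model is computed identically in the common extension where both collapse $Q$.
\item Comparing this shows $\langle R,\dot B\rangle$ is a pin.
\item It remains to check balance equivalence with $\langle Q,\dot A\rangle$. Take strengthenings $\langle Q',\dot A'\rangle$ and $\langle R',\dot B'\rangle$ and a mutually generic pair.
\item Inside the joint model, realize the $Q'$-generic also as one of the generics witnessing membership in $\dot B$. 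Every element of $\dot B'$ lies below some $q\in \dot A_h$ for some generic $h$.
\item Using the balance-style amalgamation already implicit in the definition, one then finds a common extension. This uses that $\dot A'$ and $\dot A_h$ are suprema of the same analytic class.
\end{enumerate}

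\textbf{Uniqueness.} Suppose $\langle Q,\dot A\rangle\balanceEq\langle R,\dot B\rangle$ are both pins. I must show $Q\times R\Vdash\sum\dot A=\sum\dot B$. Argue by contradiction:
\begin{enumerate}[1.]
\item Suppose some condition forces that there is $p\le a$ with $a\in\dot A$ and $p$ incompatible with every element of $\dot B$.
\item Strengthen $\langle Q,\dot A\rangle$ to the pair $\langle Q\times R,\{p\}\rangle$. This is legitimate since pairs may be refined by extending the poset.
\item Apply balance equivalence to this strengthened pair together with $\langle R,\dot B\rangle$, taken in a \emph{fresh} copy of $R$.
\item The pin property of $\dot B$ transfers the compatibility obtained with the fresh copy back to the original copy of $\dot B$. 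This contradicts the choice of $p$.
\end{enumerate}

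The main obstacle is the existence part, specifically the verification that the constructed pair is a pin (step 4 of the existence argument) while remaining balance equivalent to the original. The approach is the usual one: pass to a collapse large enough that both sides see all relevant generics, and use that $\sum$ only depends on the set of conditions up to downward closure. I expect the analyticity bookkeeping to be routine via Mostowski absoluteness.
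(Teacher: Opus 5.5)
\textbf{Uniqueness and construction.} Your uniqueness argument is correct and standard. Restrict below the offending condition, refine $\langle Q,\dot A\rangle$ to $\langle Q\times R,\{\dot p\}\rangle$, test balance equivalence against a fresh copy of $\langle R,\dot B\rangle$, and pull the resulting compatibility back through the pin property. The facts being transported, $\sum\dot B_K=\sum\dot B_{K'}$ and ``$p$ is incompatible with every element of $\dot B_K$'', are $\Pi^1_2$, so Shoenfield absoluteness carries them into $V[H][K][K']$.

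The object you build for existence is also the natural one (the paper itself only cites Larson--Zapletal here). It is trivially a pin: in $V[K_0][K_1]$ both reinterpretations are the set of conditions below an element of $\dot A_h$ for some $V$-generic $h\in V[K_0][K_1]$. So step~4 is not where the difficulty lies. You do need $\kappa$ large enough that $\mathcal P(Q)^V$ becomes countable, e.g.\ $\kappa\ge 2^{|Q|}$. With only $\kappa>|Q|$, the set of $V$-generic filters need not be Borel, and then $\dot B$ need not be analytic.

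\textbf{The gap: steps~7--8.} These steps carry all the content of the existence proof, and they are asserted rather than argued. Take $\langle Q',\dot A'\rangle\le\langle Q,\dot A\rangle$ and $\langle R',\dot B'\rangle\le\langle R,\dot B\rangle$ with $H'$, $K'$ mutually generic. Then $\sum\dot B'\le\sum\dot B$ only tells you that an element $b\in\dot B'$ is compatible with a condition below an element of $\dot A_h$. Here $h\in V[K']$ is \emph{some} $V$-generic filter on $Q$, unrelated to the filter $H$ induced by $H'$. Noticing that $H$ is also a witness for membership in $\dot B$ (step~7) produces no compatibility between $\dot A'$ and $\dot B'$. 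Nor are $\dot A'$ and $\dot A_h$ ``suprema of the same analytic class'': $\langle Q,\dot A\rangle$ is not a pin, so $\sum\dot A_H$ and $\sum\dot A_h$ may differ.

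The missing ingredient is that $\langle Q,\dot A\rangle$ is balanced, i.e.\ $\langle Q,\dot A\rangle\balanceEq\langle Q,\dot A\rangle$. This is the real content of the hypothesis that the pair lies in a balance equivalence class. For $\langle 1,\{p\}\rangle$ with $p$ not balanced, $\balanceEq$ is not even reflexive, so the statement must be read, as in Larson--Zapletal, for balanced pairs.

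With this ingredient the step is short. By the maximum principle, choose $R'$-names $\dot h$ for a $V$-generic filter on $Q$, $\dot b$ for an element of $\dot B'$, and $\dot r$ for a condition below both $\dot b$ and some element of $\dot A_{\dot h}$. Then $\langle R',\{\dot r\}\rangle\le\langle Q,\dot A\rangle$ via $\dot h$. Reflexivity of $\balanceEq$ at $\langle Q,\dot A\rangle$, applied to this pair and $\langle Q',\dot A'\rangle$, gives an element of $\dot A'$ compatible with $r\le b$. If ``balance-style amalgamation implicit in the definition'' was meant to point at this, it has to be stated and carried out; the justification you actually give does not work.
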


\begin{dfn}[{\cite[Definition 9.3.1]{larson-zapletal-GST}}]
Let $P$ be a Suslin forcing.
A virtual condition $\overline p$ of $P$ is \emph{placid} (\emph{balanced}) if and only if for all generic extensions $V[g]$, $V[h]$ such that $V[g] \cap V[h] = V$ (all pairs of mutually-generic extensions $V[g]$, $V[h]$)
 and all conditions $q \in V[g]$, $q' \in V[h]$, with $q, q' \le \overline p$,
$q$ and $q'$ are compatible.
$P$ is \emph{placid} (\emph{balanced}) if and only if for every condition $p \in P$ there is a placid (balanced) virtual condition $\overline p \le p$.
A $P$-pin is called balanced or placid when its virtual equivalence class is a balanced or placid virtual condition, respectively,
and a $P$-pair is called balanced or placid when it is balance-equivalent to a balanced or placid $P$-pin.
\end{dfn}

As we shall see in the course of the main proofs in this paper, balanced and placid pairs are of great utility in showing that specific
statements are forced, because if a statement is not decided by a balanced pair $\langle Q, \dot A \rangle$ it is often possible to
use this fact to construct incompatible pairs below $\langle Q, \dot A \rangle$.
In particular, a balanced pair for a forcing $P$ decides everything about the generic object for $P$, in the context of the Solovay model $W$, in the following sense:
\begin{thm}[{\cite[prop. 5.2.4]{larson-zapletal-GST}}]
Let $P$ be a Suslin poset and $\langle Q, \tau \rangle$ a balanced pair for $P$.
Then for any formula $\phi$ and parameter $z \in V$, one of the following holds in $V$:
\begin{itemize}
\item $Q \Vdash \Coll{\aleph_0}{<\lambda} \Vdash \tau \Vdash_P W[\dot g] \models \phi(\dot g, \check z)$;
\item $Q \Vdash \Coll{\aleph_0}{<\lambda} \Vdash \tau \Vdash_P W[\dot g] \models \neg \phi(\dot g, \check z)$,
\end{itemize}
where $\dot g = \{ \langle \check p, p \rangle : p \in P \}$ denotes the canonical $P$-name for a $P$-generic filter. 
\end{thm}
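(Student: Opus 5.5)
The argument rests on the fact that in the Solovay model $W$ every set of ordinals, and every statement with parameters from $V$, is decided by the collapse. Work in $V$ and suppose, towards a contradiction, that neither alternative holds for the balanced pair $\langle Q, \tau\rangle$. The statement $W[\dot g]\models\phi(\dot g,\check z)$ depends only on $\dot g$ and $z$, and by the homogeneity of $\Coll{\aleph_0}{<\lambda}$ its truth value after forcing below $\tau$ does not depend on the collapse condition. So if the first alternative fails, there are a condition $q_0\in Q$, an intermediate extension $V[h_0]$ of size $<\lambda$ containing a $Q$-generic through $q_0$, and a condition $p_0\le\tau/h_0$ in $V[h_0]$ forcing $\neg\phi$. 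If the second alternative fails, we similarly get $q_1$, $V[h_1]$ and $p_1\le\tau/h_1$ forcing $\phi$.

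I will first use the standard reflection argument in the Solovay model. Since $\lambda$ is inaccessible and $P$ is Suslin, forcing $P$ over $W$ below a condition $p$ lying in some small extension $V[k]$ gives a model $W[g]$ that is again a Solovay-type model over $V[k][g]$. Hence $p\Vdash_P\phi$ is decided in $V[k]$, and homogeneity shows the decision is absolute between different small extensions containing $p$. From this I obtain the following: $p$ forces $\phi$ over $W$ iff $V[k]\models\Coll{\aleph_0}{<\lambda}\Vdash p\Vdash_P\phi$.

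The core step uses balance. Take $h_0,h_1$ to be mutually generic over $V$; this is possible since $\Coll{\aleph_0}{<\lambda}$ absorbs $Q\times Q$ and the later collapses. Then $p_0\in V[h_0]$ and $p_1\in V[h_1]$ both lie below (the interpretations of) the balanced pair $\langle Q,\tau\rangle$. By the definition of balanced pairs and balance equivalence, applied with the pairs $\langle Q',\{p_0\}\rangle\le\langle Q,\tau\rangle$ and $\langle Q',\{p_1\}\rangle\le\langle Q,\tau\rangle$, they are compatible in $V[h_0,h_1]$. Let $r\le p_0,p_1$. By the reflection step, $p_0$ forces $\neg\phi(\dot g,z)$ over the Solovay model computed from $V[h_0]$, which is the same $W$ as the one computed from $V[h_0,h_1]$. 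Thus $r$ forces both $\phi$ and $\neg\phi$, which is a contradiction.

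The main obstacle is the reflection and absoluteness step. It must be justified that ``$p\Vdash_P \phi$ over $W$'' is computed correctly inside any intermediate extension of size $<\lambda$ containing $p$. This needs that $P$ is Suslin, so that $P$ and $\le$ are reinterpreted consistently, and that $W$ does not depend on the chosen small part, which follows from the homogeneity and factorisation properties of the Lévy collapse. Once this is established, the balancedness argument above is immediate.
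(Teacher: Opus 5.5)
Your argument is correct and is essentially the standard Larson--Zapletal proof that the paper cites without reproducing. Homogeneity of the collapse gives conditions $p_0, p_1$ below $\tau$ in small extensions $V[h_0]$, $V[h_1]$ forcing opposite outcomes; balance makes them compatible once $h_0, h_1$ are taken mutually generic; and since $V[h_0,h_1][K]$ is a $\Coll{\aleph_0}{<\lambda}$-extension of each $V[h_i]$ with the same $W$, a common lower bound would force both $\phi$ and $\neg\phi$.

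One sentence in your reflection paragraph is wrong: $W[g]$ is not a ``Solovay-type model over $V[k][g]$'', because $g$ is generic for $P$ as computed in the full collapse extension, not for a small forcing over $V[k]$. It does not affect the proof, since the fact you actually use is correct: $p$ forces $\phi$ over $W$ iff $V[k]\models \Coll{\aleph_0}{<\lambda}\Vdash p\Vdash_P\phi$. This follows from the factoring and homogeneity of the collapse, as you say in your last paragraph.
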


\begin{dfn}[{\cite[Definition 13.1.1]{larson-zapletal-GST}}]
A virtual condition $\overline p$ of a Suslin forcing $P$ is \textit{$(3,2)$-balanced} if and only if for every triple of filters $g_0$, $g_1$, $g_2$ for respective posets $Q_0$, $Q_1$, $Q_2$ which are pairwise mutually generic over $V$ (meaning that $g_i \times g_j$ is $Q_i \times Q_j$-generic over $V$ for $i \ne j$),
and for all conditions $p_0$, $p_1$, $p_2$ extending $\overline p$ and lying in the respective generic extensions $V[g_0]$, $V[g_1]$, $V[g_2]$, if $p_0$, $p_1$, $p_2$ are pairwise compatible then they have a common lower bound.
\end{dfn}

\begin{prop}[{\cite[Corollary of Theorem 13.2.1]{larson-zapletal-GST}}] \label{three-two-balance-all-uf-principal}
If $P$ is a $(3,2)$-balanced Suslin forcing and $g$ is $P$-generic over $W$, then in $W[g]$ there is no nonprincipal ultrafilter over $\nat$.
\end{prop}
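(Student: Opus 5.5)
The plan is to argue by contradiction, producing three subsets of $\omega$ from a pairwise-but-not-fully mutually generic triple of Cohen reals. Suppose $\dot U$ is a $P$-name and $p\in P$ forces over $W$ that $\dot U$ is a nonprincipal ultrafilter on $\omega$. Every set in $W$ is definable from a real and parameters in $V$. Every real of $W$ lies in an intermediate collapse extension $V[g\restriction\alpha]$, over which the tail is again a Levy collapse. So we may enlarge $V$ to assume that $p$ and the parameters defining $\dot U$ lie in $V$. Using $(3,2)$-balance we fix a $(3,2)$-balanced virtual condition $\overline p\le p$. We realize it via a pin $\langle Q,\dot A\rangle$ with $Q$ of size $<\lambda$, and absorb the $Q$-generic into the ground as well. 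Note that a $(3,2)$-balanced virtual condition is in particular balanced, since it already yields pairwise compatibility of conditions from mutually generic extensions.

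The first step is the uniform decision. For a Cohen real $x\in 2^\omega$ over $V$, let $a_x=\{n: x(n)=1\}$. By the decision property of balanced pairs (the theorem quoted from \cite[prop.\ 5.2.4]{larson-zapletal-GST}), $\overline p$ decides, in the $\Coll{\aleph_0}{<\lambda}$-extension, whether $\check a_x\in\dot U$. Here we work over $V[x]$, in which $\overline p$ remains balanced, and use that $W$ is also the Solovay model over $V[x]$. Cohen forcing is homogeneous, and $\overline p$ and $\dot U$ are defined in $V$. Hence the direction of the decision is the same for every Cohen real $x$ over $V$. Thus there is $s\in 2$ such that, for every Cohen $x$, there is a condition $p_x\in V[x]$ (really in a further collapse extension of $V[x]$) with $p_x\le\overline p$ and $p_x\Vdash b_x\in\dot U$, where $b_x=\{n : x(n)=s\}$.

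The second step is the triple. Take $x,y$ mutually Cohen generic over $V$. Let $z=x\oplus y$ (pointwise sum mod $2$) if $s=1$, and $z=1-(x\oplus y)$ if $s=0$. Then each of the pairs $(x,y)$, $(x,z)$, $(y,z)$ is mutually Cohen generic over $V$, but the triple is not. We choose $p_x,p_y,p_z$ in generic extensions of $V[x],V[y],V[z]$, arranging the collapse generics so that the three extensions are pairwise mutually generic over $V$. Balance of $\overline p$ gives that the three conditions are pairwise compatible. The $(3,2)$-balance of $\overline p$ then gives a common lower bound $r$. This $r$ forces $b_x\cap b_y\cap b_z\in\dot U$, and in particular that this intersection is nonempty. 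But by the choice of $z$, no $n$ has $x(n)=y(n)=z(n)=s$: if $s=1$ then $x(n)=y(n)=1$ forces $z(n)=0$, and if $s=0$ then $x(n)=y(n)=0$ forces $z(n)=1$. So the intersection is empty, which is a contradiction.

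I expect the first step to be the main obstacle. It requires checking that the balanced virtual condition still decides statements about the real $x$ after passing to $V[x]$, and that homogeneity makes the decision independent of $x$. Here I would invoke the stability of balanced virtual conditions under small forcing extensions from \cite{larson-zapletal-GST}. A secondary technicality is arranging that the extensions containing $p_x,p_y,p_z$, which include collapse generics, are pairwise mutually generic. I would handle this by choosing those collapse generics mutually generic over $V[x,y]$.
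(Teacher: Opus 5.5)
Your triple is correct: $(x,y)$, $(x,z)$ and $(y,z)$ are each mutually generic, the full triple is not, and $b_x\cap b_y\cap b_z=\emptyset$. The gap is in the first step, which is supposed to produce $p_x,p_y,p_z$.

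\textbf{The decision step.} The decision theorem for balanced pairs only covers statements whose parameters lie in the ground over which the pair is balanced, and $a_x\notin V$. Your workaround, that $\overline p$ stays balanced over $V[x]$, is false in general. Two extensions of $V[x]$ that are mutually generic over $V[x]$ share $x$. So they are not mutually generic over $V$, and the balance of $\overline p$ says nothing about them.

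The paper's own forcing shows this failure. Take $\le_T$ on $2^\omega$ and $\overline p=\langle\Coll{\omega}{X},\check{\pless}\rangle$. A Cohen real $x$ computes only recursive reals of $V$ and is computed by none. Hence in two mutually generic extensions of $V[x]$ you can extend $\pless$ by placing $x$ just below, respectively just above, the $\pless$-class of a fixed nonrecursive $b\in V$. These are incompatible conditions below $\overline p$.

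\textbf{Absorbing the $Q$-generic.} This fails for the same reason. Over $V[H]$, $\dot A_H$ is merely an analytic set of conditions. Branches that are pairwise mutually generic over $V[H]$ are not pairwise mutually generic over $V$, so neither balance nor $(3,2)$-balance of $\overline p$ applies to $p_x,p_y,p_z$. Each branch must realize $\overline p$ with its own, mutually generic, $Q$-generic filter; the pin property is what makes this legitimate.

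\textbf{Homogeneity.} This step is wrong even granting a decision. The bit-flip automorphism of $\mathbf C$ sends $\dot x$ to $1-\dot x$, hence $a_{\dot x}$ to $\omega\setminus a_{\dot x}$. So a decision for the Cohen real $x$ gives the opposite decision for the Cohen real $1-x$. Homogeneity only makes statements with ground-model parameters independent of the generic.

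\textbf{The repair.} Ask for no decision at all. This is how the paper's Cohen-model analogue, Theorem~\ref{th: no ultrafilter no inaccessible}, proceeds; for the present proposition the paper itself only cites Larson--Zapletal.
\begin{enumerate}
\item Force over $V$ with $\mathbf C\times Q$ followed by a small collapse. There some condition below the realization of $\overline p$ decides $a_x$.
\item By the forcing theorem you get a side $s\in 2$ and a condition whose Cohen coordinate is a finite stem $t$. This condition forces that some $p'\le\overline p$ satisfies $\Coll{\omega}{<\lambda}\Vdash p'\Vdash_P\{n:\dot x(n)=s\}\in\dot U$.
\item Run your construction through the stem. Take $x,y$ mutually generic Cohen reals extending $t$. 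Let $z$ agree with $t$ below $|t|$, and from $|t|$ on agree with $x\oplus y$ if $s=1$, or with $1-(x\oplus y)$ if $s=0$. Each pair is still mutually generic, and $x(n)=y(n)=z(n)=s$ only for $n<|t|$.
\item Choose the $Q$-generics and collapse generics of the three branches mutually generic over $V[x,y]$. Then the three branch models are pairwise mutually generic over $V$.
\item $(3,2)$-balance yields a common lower bound. It forces a finite set into a nonprincipal ultrafilter, which is the contradiction.
\end{enumerate}
Here $t$ plays the role of the paper's $s=q(\gamma)$, and your explicit $z$ replaces its poset $\mathbf C_s$ for $n=3$.

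One further point: moving the real parameter of $\dot U$ into $V$ tacitly requires $P$ to remain $(3,2)$-balanced in that intermediate model. This assumption should be stated.
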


We will prove that for certain preorders $\pbelow$, there is a natural $(3,2)$-balanced forcing associated to it. Therefore, the Solovay model $W$ can be extended to a model containing a prelinearization of $\pbelow$ while preserving $\neg \mathsf{Uf}(\omega)$.

\section{Funicular preorders}

\begin{dfn}\label{def:funicular}
A definable preorder $\pbelow$ on a Polish space $X$ is \emph{funicular} iff for every filter $g \times h$ generic for a product forcing $P \times Q$ and for every $x \in V[g]$ and $y \in V[h]$ with
$x \pbelow y$, there is $z \in V$ with $x \pbelow z \pbelow y$.
In this section, ``definable'' will mean Borel.
\end{dfn}

One can also relativize the definition of funicular to a class of forcings.
For $\mathcal P$ a class of forcings,
call a definable preorder $\pbelow$ on a Polish space $X$ \emph{$\mathcal P$-funicular} if and only if for every pair of forcings $P, Q \in \mathcal P$ and for every generic filter $g \times h$ for $P \times Q$,
if $x \in V[g]$ and $y \in V[h]$ satisfy $x \pbelow y$ in $V[g \times h]$, then there is $z \in V$ with $x \pbelow z \pbelow y$.
This relativization will be important in the goal of taking the inaccessible away, see Definition~\ref{def: cohen funicular}.

\begin{lemma}\label{lemma: exs of funicular preorders}
The following Borel preorders on Polish spaces are funicular:
\begin{enumerate}
\item \label{pareto-funicular} $(\reals^{\omega}, \le)$, where the comparison $\le$ is performed coordinatewise.
\item \label{nat-domination-funicular} $(\omega^\omega, \le)$.
\item \label{real-eventual-domination-funicular} $(\reals^\omega, \le^*)$, where $\le^*$ is domination modulo the ideal of finite subsets of $\omega$.
\item \label{nat-eventual-domination-funicular} $(\omega^\omega, \le^*)$.
\item \label{turing-funicular} $(2^\omega, \le_T)$, where $\le_T$ is Turing reducibility.
\item \label{ctble-reducibility-funicular} Generalizing the last item, any Borel preorder $(X, \le)$ with the property that
if $x, y \in X$ and $x \le y$, then either $x \in \text{OD}_y$ or $y \in \text{OD}_x$ via a formula which is upward absolute.

\end{enumerate}
\end{lemma}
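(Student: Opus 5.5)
The common tool is the product lemma together with the standard fact that $V[g]\cap V[h]=V$ whenever $g\times h$ is $P\times Q$-generic over $V$. For the order-type items (\ref{pareto-funicular})--(\ref{nat-eventual-domination-funicular}) I will build the interpolant $z$ in $V$ out of the forcing relation of $Q$. For the reducibility items (\ref{turing-funicular})--(\ref{ctble-reducibility-funicular}) I will show that one of $x,y$ already lies in $V[g]\cap V[h]=V$ and take $z$ to be it.

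For item (\ref{pareto-funicular}), work in $V[g\times h]$ with $x\in V[g]$, $y\in V[h]$ and $x\le y$ coordinatewise. Fix a $Q$-name $\dot y\in V$ for $y$ and conditions $p\in g$, $q\in h$ with $\langle p,q\rangle\Vdash \check x\le\dot y$ (here $x$ is named in $V[g]$). By the product lemma, $q$ forces $x\le\dot y$ over $V[g]$. I will define, in $V$,
\[ z(n)=\sup\{s\in\rationals : q\Vdash_Q \dot y(n)\ge \check s\}. \]
The key observation is that for $s$ rational, and $\dot y$ and $q$ in $V$, the statement $q\Vdash_Q\dot y(n)\ge s$ has the same truth value in $V$ and in $V[g]$. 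This holds in one direction because every $Q$-generic filter over $V[g]$ is generic over $V$. It holds in the other because if it fails in $V$, some $r\le q$ forces $\dot y(n)<s$ over $V$, and hence also over $V[g]$.

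Given this observation, three facts follow. First, $z(n)\le y(n)$, since $q\in h$. Second, $z(n)\ge x(n)$: every rational $s<x(n)$ is forced by $q$ over $V[g]$ to be below $\dot y(n)$, hence also over $V$. Third, $z\in V$, and the set in the supremum is nonempty and bounded, so $z(n)$ is a real. Thus $x\le z\le y$.

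Item (\ref{nat-domination-funicular}) is the same argument with integers in place of rationals, where the supremum is attained. For items (\ref{real-eventual-domination-funicular}) and (\ref{nat-eventual-domination-funicular}), I will additionally strengthen $\langle p,q\rangle$ to decide a threshold $m$ beyond which $x(n)\le y(n)$ holds. Then I define $z(n)$ by the same supremum for $n\ge m$ and set $z(n)=0$ for $n<m$, which gives $x\le^* z\le^* y$.

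For item (\ref{turing-funicular}), if $x\le_T y$ then $x$ is computed from $y$ by some recursive functional, so $x\in V[h]$. Since also $x\in V[g]$, we get $x\in V$, and $z=x$ works because $\le_T$ is reflexive.

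Item (\ref{ctble-reducibility-funicular}) follows the same pattern, and I expect it to be the main obstacle. Suppose $x\in\OD_y$ in $V[g\times h]$ via a formula $\varphi(\cdot,y,\alpha)$ defining $x$ uniquely. I would argue that the unique witness already exists in $V[h]$: the upward absoluteness of $\varphi$ together with $\Sigma^1_1$/Shoenfield-style absoluteness of "there is a unique real satisfying $\varphi$ with parameters $y,\alpha$" should put the witness in $V[h]$ and make it equal to $x$. Alternatively, one can use the homogeneity of the quotient forcing from $V[h]$ to $V[g\times h]$ in the relevant cases to see that $x\in\HOD_{V[h]\cup\{y\}}\subseteq V[h]$. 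The symmetric case $y\in\OD_x$ gives $y\in V[g]$. In either case the element lies in $V[g]\cap V[h]=V$ and serves as $z$. The delicate point is pinning down exactly which absoluteness the phrase ``upward absolute'' must deliver so that the definable real really lands in the smaller model. If a general argument fails, I would fall back on the homogeneity argument.
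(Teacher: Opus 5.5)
Items (1)--(5) are correct. In (1) your route differs slightly from the paper's. The paper stays inside $P\times Q$ over $V$. It compares the rationals that some $(p',q)$ with $p'\le p$ forces below $\dot x(n)$ with those that some $(p,q')$ with $q'\le q$ forces above $\dot y(n)$; the inequalities in its printed $A_n$, $B_n$ are reversed. It separates the two sets using that $(p',q')$ is a condition, and takes any $z_n$ in the gap. Your $z(n)$ is exactly the infimum of that upper set, i.e.\ the top endpoint of the paper's interval. You obtain $x(n)\le z(n)$ not from a $P$-side set but by transferring the $Q$-forcing relation for the ground-model name $\dot y$ between $V$ and $V[g]$. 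The gain is that $z$ depends only on $(\dot y,q)$. Your direct integer version of (2), in place of flooring, and your (3)--(5) match the paper in substance.

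The gap is in (6), at the step $x\in V[h]$. This step carries the whole argument and you leave it open; none of the tools you name gives it in the required generality.
\begin{itemize}
\item Upward absoluteness of the defining formula $\varphi(v,y,\vec\alpha)$ only says that a solution found in $V[h]$ stays a solution in $V[g\times h]$, and hence equals $x$. It produces no solution in $V[h]$.
\item Shoenfield absoluteness would produce one only for $\Sigma^1_2$ definitions with real parameters. But $\OD_y$ allows arbitrary ordinal parameters, and $\Sigma_1$ statements about uncountable ordinals (e.g.\ ``$\alpha$ is countable'') are not absolute between $V[h]$ and $V[g\times h]$.
\item The homogeneity fallback fails as stated, because the quotient from $V[h]$ to $V[g\times h]$ is $P$ itself. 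Funicularity quantifies over arbitrary $P$, so homogeneity is only available ``in the relevant cases''.
\end{itemize}

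The paper closes this step by reading the hypothesis as saying that every model of set theory containing $y$ also contains $x$. This is exactly what holds in (5), where $x=\Phi_e^y$ is computed the same way in every model. With that reading, $x\in V[h]$ is immediate, and your finish is right: $x\in V[g]\cap V[h]=V$ and $z=x$ (symmetrically $z=y$). Your finish is in fact more accurate than the paper's remark that the order is ``vacuously'' funicular, since $x\le y$ with $x\in V$ and $y\notin V$ can certainly occur.

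Alternatively, your homogeneity idea can be made to work for all $P,Q$, provided the hypothesis on $\le$ holds in generic extensions:
\begin{itemize}
\item Extend $g\times h$ by $k\times k'$ generic over $V[g\times h]$ for $\Coll{\omega}{\kappa}\times\Coll{\omega}{\kappa}$, with $\kappa\ge|P|,|Q|$ infinite.
\item Then $P\times\Coll{\omega}{\kappa}$ is forcing-equivalent to the weakly homogeneous $\Coll{\omega}{\kappa}$, also over $V[h][k']$.
\item So if $x\in\OD_y$ in $V[g,k,h,k']$, then $x\in V[h][k']$.
\item Since $g$ and $h\times k'$ are mutually generic, $x\in V[g]\cap V[h][k']=V$.
\end{itemize}
The case $y\in\OD_x$ is symmetric.
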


\begin{proof}
It is clear that all these preorders are Borel.
Let $X$ be the domain of the analytic preorder $\pbelow$ under consideration, and fix forcing posets $P$ and $Q$, a filter $g \times h$ generic for $P \times Q$, and elements $x \in X^{V[g]}$ and $y \in X^{V[h]}$ with $x \pbelow y$.

(\ref{pareto-funicular}) Fix names $\dot x$ for $x$ in $V[g]$ and $\dot y$ for $y$ in $V[h]$, and a condition $(p, q) \in P \times Q$ such that $(p, q) \Vdash \dot x \le \dot y$.
For $n \in \omega$ define
\[ A_n = \{ s \in \rationals \mathrel{:} \exists p' \le p \ (p', q) \Vdash \check s > \dot x(n) \}, \]
\[ B_n = \{ s \in \rationals \mathrel{:} \exists q' \le q \ (p, q') \Vdash \check s < \dot y(n) \}. \]
Note that $\sup A_n \le \inf B_n$ because $(p, q) \Vdash \dot x(n) \le \dot y(n)$, so there is $z_n \in \reals^V$ with $\sup A_n \le z_n \le \inf B_n$.
Define $z \in (\reals^\omega)^V$ by $z(n) = z_n$, and observe that $(p, q) \Vdash \dot x \le \check z \le \dot y$.
Therefore $x \le z \le y$.

(\ref{nat-domination-funicular}) Given $x, y \in \omega^\omega$ from mutually generic extensions of $V$, by~(\ref{pareto-funicular}) there is $z \in (\reals^\omega)^V$ such that $x \le z \le y$.
Now simply note that $\lfloor z \rfloor \in (\omega^\omega)^V$ satisfies $x \le \lfloor z \rfloor \le y$.

(\ref{real-eventual-domination-funicular})---(\ref{nat-eventual-domination-funicular}) Upon ignoring finitely-many coordinates (a set of coordinates which exists in $V$), this becomes the same problem as (\ref{pareto-funicular}) and (\ref{nat-domination-funicular}), respectively.

(\ref{turing-funicular}) If $x \le_T y$ then $x \in V$, because there is a function $f$ recursive in $y$ with the
property that $f(y) = x$, and so $x$ and $y$ cannot be mutually-generic with respect to any nontrivial forcing.

(\ref{ctble-reducibility-funicular}) Suppose $x \le y$.
Without loss of generality (using a symmetric argument in the other case), we have that $x \in \text{OD}_y$ via a defining formula which is upward absolute.
Then any model of set theory containing $y$ also contains $x$, and so it is impossible to find nontrivial mutually generic extensions $V[g]$ and $V[h]$ such that $x \in V[g]$ and $y \in V[h]$.
Thus $\le$ is vacuously funicular.

\end{proof}

Item~(\ref{pareto-funicular}) of the preceding theorem in fact entails that the relation $\le_{\text{AP}}$, defined for $x, y \in \reals^\omega$ by $x \le_{\text{AP}} y$ if and only if there is a finite permutation $\pi$ of
$\omega$ such that $x \circ \pi \le y$ (coordinatewise), is funicular.
This is of importance to the theory of social welfare orders, where relations prelinearizing $\le_{\text{AP}}$ are called \emph{anonymous Pareto social welfare orders}.
The authors examine the applications to economics of the forcing theory of funicular preorders in the companion paper\cite{economics-paper}.

The notion of \emph{tranquility} of a preorder $\le$ strengthens that of funicularity by requiring that if $x \le y$, $x \in V[g]$, $y \in V[h]$, and
$V[g] \cap V[H] = V$, then there is $z \in V$ with $x \le z \le y$. This was used by the second author in~\cite{serafin-SEA} to construct a model of set theory containing a prelinearization of a preorder of importance in social choice theory
and no nonprincipal ultrafilter on $\omega$.
However, the same arguments do not work for the economically more interesting case of AP orders, since it turns out that
the relevant preorder (which extends coordinatewise $\le$ on $\RR^\omega$) is not tranquil.

\begin{prop} \label{prop: coordinatewise is not tranquil}
The relation $\le$ on $\RR^\omega$, interpreted coordinatewise, is not tranquil.
\end{prop}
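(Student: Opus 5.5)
The plan is to build two generic extensions $V[g]$ and $V[h]$ with $V[g]\cap V[h]=V$ that are \emph{not} mutually generic. The extensions will contain $x\in(\RR^\omega)^{V[g]}$ and $y\in(\RR^\omega)^{V[h]}$ with $x\le y$ coordinatewise, and there will be no $z\in V$ with $x\le z\le y$. Let $\langle c,e\rangle$ be $\mathbf{C}\times\mathbf{C}$-generic over $V$, with both $c,e\in 2^\omega$. Put $x=c$ and $y=d$, where $d=c\vee e$ is the pointwise maximum; identify elements of $2^\omega$ with $0/1$-valued elements of $\RR^\omega$. Set $g=c$ and $h=d$. Then $c\le d$ coordinatewise holds trivially.

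\emph{Step 1: no interpolant in $V$.} Suppose $z\in(\RR^\omega)^V$ and $c\le z\le d$. Put $A=\{n: z(n)\ge 1\}$ and $B=\{n: z(n)\le 0\}$. These are disjoint sets in $V$, with $c^{-1}(1)\subseteq A$ and $d^{-1}(0)\subseteq B$.
\begin{itemize}
\item If $\omega\setminus A$ is infinite, a density argument in $\mathbf{C}\times\mathbf{C}$ gives some $n\notin A$ with $c(n)=1$, a contradiction.
\item If $\omega\setminus A$ is finite, then $A$ is infinite. A density argument gives $n\in A$ with $c(n)=e(n)=0$, so $d(n)=0$ and $n\in B\cap A=\emptyset$, again a contradiction.
\end{itemize}
So no such $z$ exists. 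This step is routine.

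\emph{Step 2: $V[c]$ and $V[d]$ are genuine, non-mutually-generic extensions.} First, $d$ is generic over $V$ for a countable atomless poset, namely the image of $\mathbf{C}\times\mathbf{C}$ under $(c,e)\mapsto c\vee e$. Hence by Theorem~\ref{thm:Cohen-properties} it is Cohen over $V$, so $V[d]$ is a generic extension. Second, $c$ and $d$ are not mutually generic, since $c\le d$ is a nontrivial constraint. Indeed, mutual genericity would let us find $n$ with $c(n)=1$ and $d(n)=0$. So once Step 3 is done, the example witnesses failure of tranquility.

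\emph{Step 3: $V[c]\cap V[d]=V$.} This is the main obstacle. Work over $V[d]$. The pair $(c,e)$ is then generic for the quotient forcing: on $d^{-1}(0)$ both coordinates are $0$, and on $D=d^{-1}(1)$ the values of $c$ form a Cohen-generic element of $2^D$ over $V[d]$ (the pair $(c(n),e(n))$ ranges over the three nonzero options). Let $a$ be a set of ordinals in $V[c]\cap V[d]$, so $a=\tau(c)$ for a $\mathbf{C}$-name $\tau\in V$.

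I would first try a two-copy argument. Over $V[d][c]$, pick a second $c'\in 2^D$ Cohen-generic on $D$ (zero off $D$), with $c'\vee e'=d$ for a suitable $e'$. Then $\langle c',e'\rangle$ is again $\mathbf{C}\times\mathbf{C}$-generic over $V$ with the same $d$. Because $a\in V[d]$, the statement ``$\tau(c)=\check a$'' is forced by the quotient over $V[d]$, so $\tau(c')=a$ too. Next, arrange $c'$ so that $c'\restriction D$ is Cohen over $V[c]$ as well. This should force $c$ and $c'$ to be ``mutually generic modulo their common zeros,'' and hence make $a\in V$. The delicate point is that $D$ depends on both reals. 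If this direct route stalls, the fallback is to show the intersection is trivial by computing the complete Boolean subalgebras generated by $c$ and by $d$ inside the Cohen algebra of $\mathbf{C}\times\mathbf{C}$. We would check that their intersection is $\{0,1\}$, via a density argument: any Boolean value determined by both $c$ and $d$ can be perturbed by a finite modification of $e$ that preserves $c$, and by a finite modification swapping values of $c$ on $D$ that preserves $d$. Combining these perturbations moves any condition to one compatible with an arbitrary other condition, so every such value is $0$ or $1$.
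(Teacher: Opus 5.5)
Your example is a valid witness, and Steps 1 and 2 are correct; Step 1 is a fine substitute for the paper's appeal to unboundedness of the Cohen real in its $\zahlen$-valued pair $\gamma<\gamma'$. But Step 3 carries essentially all the content of the proposition, and it is not proved: both of your routes stop at the decisive point.

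In the two-copy argument there are two problems. First, $\tau(\dot c)=\check a$ is forced over $V[d]$ only by some condition $v_0$ of the quotient, not by the quotient outright (e.g.\ if $\tau(c)$ records $c(0)$ and $d(0)=1$). So $c'$ must be chosen through $v_0$. Second, and more seriously, you never explain why $c,c'$ being ``mutually generic modulo their common zeros'' yields $a\in V$, and you yourself expect this may stall. The missing observation is that $c$ and $c'$ are \emph{fully} $\mathbf{C}\times\mathbf{C}$-generic over $V$. The triple $(d,c,c')$ is generic over $V$ for $T=\{(u,v,w)\in(2^k)^3 : k\in\omega,\ v\le u,\ w\le u\}$, and $(u,v,w)\mapsto(v,w)$ is a projection (pad $u$ with $1$'s). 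So the shared zeros off $D$ impose no constraint visible from $V$, and $a=\tau(c)=\tau(c')\in V[c]\cap V[c']=V$.

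Your fallback aims at the wrong statement. Since $\tau(\dot c)=\sigma(\dot d)$ is forced only below some condition $p$, the values $[\![\check\alpha\in\tau(\dot c)]\!]\in B_c$ and $[\![\check\alpha\in\sigma(\dot d)]\!]\in B_d$ need only agree below $p$. Consequently, triviality of $B_c\cap B_d$ does not imply $V[c]\cap V[d]=V$. For instance, take $c_0,c_1,c_2$ mutually Cohen, and let $x=c_0$ if $c_2(0)=0$ and $x=c_1$ otherwise. The algebras generated by $(c_0,c_2)$ and by $x$ intersect trivially, yet both extensions contain $c_0$ whenever $c_2(0)=0$.

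Your perturbation idea does work in local form. Use only flips of $e(n)$ and permutations of $\{(0,1),(1,0),(1,1)\}$ at coordinates $n\ge\lh(p)$. These fix $p$, and they fix $B_c$, respectively $B_d$, pointwise, so they all fix $p\wedge[\![\check\alpha\in\tau(\dot c)]\!]$. Together they act transitively on $\{0,1\}^2$ at each such coordinate, so this element is $0$ or $p$; that is, $p$ decides $a$.

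Alternatively, replacing $c$ by $1-c$ turns $(c,d)$ into a generic for the poset $\mathbf{C}_s$ with $s=\emptyset$ and $n=2$ from the proof of Theorem~\ref{th: no ultrafilter no inaccessible}. Item (v) of the Claim there then gives $V[c]\cap V[d]=V$ directly; it is the same name-splitting argument the paper uses here for its pairs $p(k)<q(k)$.
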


\begin{proof}
    Consider the poset
    \[ P = \{ (p, q) \in \bigcup_{n < \omega} (\zahlen^n \times \zahlen^n) : \forall k < n. p(k) < q(k) \}, \]
    ordered by end-extension in both coordinates.
    This is clearly an atomless, countable poset, and so is forcing-equivalent to Cohen forcing by Theorem~\ref{thm:Cohen-properties}.
    Fix a filter $h$ which is $P$-generic over $V$,
    and let $\gamma = \bigcup_{(p, q) \in g} p$ and $\gamma' = \bigcup_{(p, q) \in g} q$.
    Take $g$ and $g'$ to be the filters generated by initial segments of $\gamma$ and $\gamma'$, respectively.
    Clearly $\gamma \le \gamma'$; we shall show that $V[\gamma] \cap V[\gamma'] = V$ and that there is no
    $z \in V$ with $\gamma \le z \le \gamma'$.

    For the latter statement it suffices to show that $\gamma$ and $\gamma'$ are $\zahlen^{<\omega}$-generic over $V$,
    since Cohen reals are unbounded~\cite[Lemma 22.1]{halbeisen-combinatorial}.
    But this is immediate, since for any dense set $D \subseteq \zahlen^{<\omega}$ it is clear that both
    $(D \times \zahlen^{<\omega}) \cap P$ and $(\zahlen^{<\omega} \times D) \cap P$ are dense in $P$.

    Now it remains to verify that $V[\gamma] \cap V[\gamma'] = V$.
    By the zero-extension of a condition $p \in \zahlen^{<\omega}$ to some length $\ell > |p|$ we mean the function
    $p' : \ell \rightarrow \zahlen$ defined by $p'(k) = p(k)$ for $k < |p|$ and $p'(k) = 0$ otherwise.
    Take $\dot g$, $\dot g'$ to be the canonical names for the left and right projections of $h$.
    Let $x \in V[\gamma] \cap V[\gamma']$, and fix names $\dot x$, $\dot y$ thereof and a condition $(p, q) \in h$
    such that $(p, q) \Vdash \dot x_{\dot g} = \dot y_{\dot g'}$.
    By $\in$-induction we may assume that $\zahlen^{<\omega} \Vdash \dot x, \dot y \subseteq V$.
    Hence assuming for contradiction that $x \notin V$,
    there are $z \in V$ and conditions $p', p'' \le p$ of the same length such that
    $p' \Vdash \check z \in \dot x_{\dot g}$ and $p'' \Vdash \check z \notin \dot x_{\dot g}$, for otherwise $p$ would decide all
    elements of $\dot x_{\dot g}$ and consequently $x$ would be in $V$.
    Now, if $q$ were to decide whether $\check z \in \dot y$, say by deciding $\check z \notin \dot y$,
    then we would have a contradiction because,
    taking $q_0$ to be an extension of $q$ to the common length of $p'$ and $p''$
    and satisfying $q_0(k) < p'(k), p''(k)$ for each $k$,
    the condition $(p', q_0)$ forces that $\check z \in \dot x_{\dot g}$, while $(p'', q_0)$ forces the opposite,
    contradicting the fact that $(p, q) \Vdash \dot x_{\dot g} = \dot y_{\dot g'}$
    since $(p', q_0), (p'', q_0) \le (p, q)$.
    Hence there are $q', q'' \le q$ of the same length such that
    $q' \Vdash \check z \in \dot y_{\dot g'}$ and $q'' \Vdash \check z \notin \dot y_{\dot g'}$.
    Extending as necessary, we may assume that $|p'| = |p''| = |q'| = |q''|$.
    Now we have a new contradiction, because
    \[ (p', q'') \Vdash \check z \in \dot x_{\dot g} \setminus \dot y_{\dot g'}, \]
    and yet $(p', q'') \le (p, q)$ and $(p, q) \Vdash \dot x_{\dot g} = \dot y_{\dot g'}$.
    \end{proof}

An example of a Borel preorder which is not funicular was suggested to us by Jonathan Schilhan.
Consider the clopen subsets $[0]$ and $[1]$ of $2^\nat$ which consist of those sequences beginning with $0$ and those beginning with $1$, respectively.
Order these by $x \precsim y$ if and only if either $x = y$ or $x \in [0]$ and $y \in [1]$.
This relation is obviously Borel, and it is not funicular because if $g, g'$ are mutually-generic Cohen reals with $g(0) = 0$ and $g'(0) = 1$, then $g$ and $g'$ witness a failure of funicularity.

\section{Forcing prelinearizations over the Solovay model} \label{sec:solovay-model-prelinearization}

\begin{dfn}
For $\pbelow$ an analytic preorder on a Suslin space $X$, the forcing $\PrelinPoset[\pbelow]$ consists of enumerations of countable $R \subseteq X \times X$ satisfying that $R \restriction \dom(R)$ is a prelinearization of
$\pbelow \restriction \dom(R)$, ordered by reverse end-extension of the enumerated relations.
\end{dfn}

\begin{lemma} \label{prelin-extend}
For $\pbelow$ a Borel preorder on a Polish space $X$, $\PrelinPoset[\pbelow]$ is a Suslin $\sigma$-closed forcing.
\end{lemma}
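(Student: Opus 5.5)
We have to show two things: that $\PrelinPoset[\pbelow]$ is Suslin, and that it is $\sigma$-closed. We code conditions as elements of a Polish space and check that membership, the ordering, and incompatibility are analytic (in fact Borel).

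\emph{Coding.} An enumeration of a countable relation $R\subseteq X\times X$ is coded as a sequence $e=\langle (x_n,y_n):n<\omega\rangle\in (X\times X)^\omega$, or as a finite sequence of such pairs. The code space is $Y=\bigsqcup_{k\le\omega}(X\times X)^k$, a countable disjoint union of Polish spaces and hence Polish. Write $R_e=\{(x_n,y_n)\}$ and $D_e=\dom(R_e)\cup\operatorname{ran}(R_e)$, the field, which is enumerated by the coordinates of $e$.

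\emph{The poset is Borel.} The condition ``$R_e\restriction D_e$ is a prelinearization of $\pbelow\restriction D_e$'' unfolds into countably many clauses over indices of coordinates. Let $u,v,w$ range over the coordinates of $e$, and let $(u,v)\in R_e$ mean $\exists n\,(x_n=u\wedge y_n=v)$. The clauses are:
\begin{itemize}
\item reflexivity and totality: $(u,u)\in R_e$, and $(u,v)\in R_e$ or $(v,u)\in R_e$;
\item transitivity;
\item extension of the preorder: $u\pbelow v\Rightarrow (u,v)\in R_e$;
\item preservation of strictness: $u\psbelow v\Rightarrow (v,u)\notin R_e$.
\end{itemize}
Each clause is a countable Boolean combination of conditions of the form ``coordinate $i$ equals coordinate $j$'' (closed) and ``$(x_i,x_j)\in{\pbelow}$'' (Borel, since $\pbelow$ is Borel). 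So $\PrelinPoset[\pbelow]$ is a Borel subset of $Y$.

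\emph{The order is Borel.} We read ``reverse end-extension'' as: $e'\le e$ iff $e$ is an initial segment of $e'$. This is closed on $Y\times Y$.

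\emph{Incompatibility is analytic.} This is the main point. A priori $\perp$ is co-analytic, being the negation of $\exists e''\,(e''\le e\wedge e''\le e')$. Since the order is end-extension, however, a common extension of $e$ and $e'$ exists only if one of them is an initial segment of the other, and then the longer one is itself a common extension. Hence $e\perp e'$ iff neither is an initial segment of the other, which is Borel. Wherever conditions are required to be countable infinite enumerations, the same reasoning applies verbatim.

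If instead the intended order is extension of the relations as sets, with the codes merely enumerating them, we would argue as follows. We show that $e$ and $e'$ are compatible iff $R_e\cup R_{e'}$, restricted to $D_e\cup D_{e'}$, extends to a prelinearization of $\pbelow\restriction(D_e\cup D_{e'})$. The key step is that such a partial relation on a countable set can be completed by a Szpilrajn-type argument; this works by Theorem~\ref{lemma: prereq preorder can be extended to prelin} applied to a countable set, where no choice is needed since the set is well-orderable. The compatibility condition is then a Borel statement about the transitive closure of $R_e\cup R_{e'}\cup{\pbelow}$ on the countable set $D_e\cup D_{e'}$, namely that no strict $\pbelow$-pair gets reversed. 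This makes $\perp$ Borel in this reading too, and it is the step I would check most carefully.

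\emph{$\sigma$-closedness.} Given a descending sequence $e_0\ge e_1\ge\cdots$, the $e_n$ are successive end-extensions, so their union $e$ is a well-defined enumeration. Each clause of the prelinearization condition involves only finitely many coordinates, so each clause holds in some $e_n$ and is inherited by $e$. Thus $e$ is a condition with $e\le e_n$ for all $n$. Because conditions are countable objects, this lower bound exists in any model of $\ZF+\DC$, which is exactly what the Suslin framework needs.
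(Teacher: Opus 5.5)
Your proposal is correct and follows essentially the paper's route: code conditions as sequences in a Polish space, check the prelinearization clauses are Borel using that $\pbelow$ is Borel, and take unions of end-extending sequences for $\sigma$-closure. Your explicit check that $\perp$ is Borel (under end-extension, two codes are compatible iff one is an initial segment of the other) fills in a point the paper's proof skips. In the $\sigma$-closure step, the strictness clause is best justified by noting that any violation in the union involves only finitely many coordinates, so it would already occur in some $e_n$.
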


\begin{proof}
Because $X$ is a Polish space so is $(X \times X)^\omega$, and $\PrelinPoset[\pbelow] \subseteq (X \times X)^\omega$.
We write $\PrelinPoset$ for $\PrelinPoset[\pbelow]$ for the remainder of the proof.
The condition that $R \in \PrelinPoset$ is prelinear is clearly Borel, because $R$ is countable and the definition of a prelinear order is first-order.
Apply a Borel function to the enumeration of a given $R \in \PrelinPoset$ to produce an enumeration $\langle x_n \rangle_{n < \omega}$ of $\dom(R)$,
for example by enumerating $\dom(R)$ in the same order as $R \cap \Delta$ where $\Delta \subseteq X \times X$ is the diagonal.
The prelinearization condition requires that
\[ \forall i, j \ (x_i \pbelow x_j \wedge x_j \npbelow x_i) \Rightarrow (x_i R x_j \wedge \neg x_j {R} x_i), \]
which is Borel because the complement of a Borel set is Borel.
(Note this argument would fail were $\pbelow$ allowed to be an arbitrary analytic preorder.)
Reverse inclusion of enumerations is clearly a Borel relation, and therefore $\PrelinPoset$ is a Suslin forcing. 

Now suppose $\langle R_n \rangle_{n < \omega}$ is an end-extension-increasing sequence of conditions in $\PrelinPoset$, and let $R$ be the enumeration of $\bigcup_{n < \omega} R_n$ in order of appearance of each pair.
$R$ clearly enumerates a prelinear order.
Now suppose $x, y \in \dom(R)$, $x \pbelow y$, and $y \npbelow x$.
Choose $n$ such that $x, y \in R_n$.
Then because $R_n \in \PrelinPoset$, $x R y$ and $\neg (y \mathrel{R} x)$.
Since $R \in \PrelinPoset$ and $R$  end-extends $R_n$ for each $n$, we conclude that $\PrelinPoset$ is $\sigma$-closed.
\end{proof}

Note that $\sigma$-closed forcing over a model of $\ZF + \DC$ preserves $\DC$, a folklore result. 

\begin{lemma}\label{lemma: funicular implies 32balanced}
Let $\pbelow$ be a funicular Borel preorder on a Polish space $X$.
\begin{enumerate}
\item \label{total-balance} For $\pless$ a prelinearization of $\pbelow$ in $V$, $\langle P, \check{\pless} \rangle$ is a $(3,2)$-balanced virtual condition, where $P = \Coll{\omega}{X}$. 
\item \label{total-classifies} If $\langle Q, \dot R \rangle$ is a $(3,2)$-balanced virtual condition, then there is in $V$ a total prelinearization $\pless$ of $\pbelow$ such that
$\langle Q, \dot R \rangle$ is balance-equivalent to $\langle P, \check{\pless} \rangle$.
\item \label{prelin-balanced} The poset $\PrelinPoset[\pbelow]$ is $(3,2)$-balanced.
\end{enumerate}
\end{lemma}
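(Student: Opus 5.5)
The plan is to prove~(\ref{total-balance}) first, since (\ref{total-classifies}) and (\ref{prelin-balanced}) follow from it together with standard uniqueness and decision arguments. Throughout, write $\PrelinPoset$ for $\PrelinPoset[\pbelow]$.

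\textbf{Part (\ref{total-balance}).} After $\Coll{\omega}{X}$ the set $X^V$ is countable, so $\pless$ (with any enumeration) is a condition in $\PrelinPoset$. Any two enumerations are compatible, so the left and right copies have the same supremum, and $\langle P, \check{\pless}\rangle$ is a pin. For $(3,2)$-balance, take pairwise mutually generic $V[g_0], V[g_1], V[g_2]$ and conditions $p_i \in V[g_i]$ with $p_i \le \pless$. Work in the joint extension. Let $D = \bigcup_i \dom(p_i)$ and let $S$ be the transitive closure of $p_0 \cup p_1 \cup p_2 \cup ({\pbelow}\restriction D)$. The key claim is that $S$ is a preorder containing $\pbelow\restriction D$ and ${\psbelow}\restriction D \subseteq {\psless_S}$. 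Given the claim, we extend $S$ to a prelinear order of the countable set $D$ respecting $\psbelow$ by a countable Szpilrajn--Hansson construction, which needs no choice because $D$ is countable. We then enumerate the result as an end-extension of the $p_i$; this gives the common lower bound. Pairwise compatibility of the $p_i$ is not even needed.

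\textbf{Proof of the claim (main obstacle).} Suppose an $S$-chain returns to its start and uses a strict step. We reroute the chain through ground points. By mutual genericity, $V[g_i]\cap V[g_j]=V$, so any point lying in two of the domains is in $X^V$. Any $\pbelow$-step $x \pbelow y$ with $x \in V[g_i]$ and $y \in V[g_j]$, $i\neq j$, is interpolated by funicularity as $x \pbelow z \pbelow y$ with $z \in X^V \subseteq \dom(p_i)\cap\dom(p_j)$. After this, the cycle breaks at ground points into segments, each lying inside one $\dom(p_i)$ and using only $p_i$ and $\pbelow$-steps. Since $p_i$ is a prelinearization of $\pbelow$ on its domain, each segment collapses to a single $p_i$-step between ground points, and it is strict whenever the segment contains a strict step. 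Because $p_i$ end-extends $\pless$, its restriction to $X^V$ is $\pless$. So we obtain a $\pless$-cycle with a strict step, which is a contradiction. If the cycle meets no ground point, it lies in one $p_i$, which is again a contradiction. The care needed is in making sure that strictness survives the interpolation and the collapsing.

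\textbf{Parts (\ref{total-classifies}) and (\ref{prelin-balanced}).} For~(\ref{total-classifies}), define $\pless$ in $V$ by $x \pless y$ iff $Q \Vdash x \mathrel{\dot R'} y$ for every (equivalently, some) extension $R'$ of $\dot R$ containing $x,y$ in its domain. We first show that $\langle Q,\dot R\rangle$ decides this. If not, take $Q'$, $Q''$ below it forcing opposite answers, realise them in mutually generic extensions, and extend both conditions to include $x,y$ (a dense requirement). Balance of the virtual condition makes the two extensions compatible, yet they disagree on $x,y$, which is a contradiction. Totality and transitivity of $\pless$ and the requirement ${\psbelow}\subseteq{\psless}$ transfer from conditions, so $\pless$ is a prelinearization of $\pbelow$ in $V$. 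Balance-equivalence with $\langle P,\check{\pless}\rangle$ follows by the gluing argument of~(\ref{total-balance}), applied to one condition below $\dot R$ and one below $\check{\pless}$ in mutually generic extensions. Both restrict to $\pless$ on $X^V$, so the same rerouting argument produces a common extension. For~(\ref{prelin-balanced}), given $p \in \PrelinPoset$ in $V$, use AC in $V$ and Theorem~\ref{lemma: prereq preorder can be extended to prelin} to extend the preorder generated by $p$ and $\pbelow$ to a prelinearization $\pless$ of $\pbelow$. Then $\langle P,\check{\pless}\rangle$ lies below $p$ (after end-extending its enumeration) and is $(3,2)$-balanced by~(\ref{total-balance}).
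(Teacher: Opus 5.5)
Parts (1) and (3) of your proposal follow the paper; the genuine difference is at the end of (2).

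For (1), the paper likewise turns a failure of amalgamation into a cycle with a strict edge. It uses funicularity to insert ground points $z$ into cross-model $\pbelow$-edges; these lie in every domain because each $p_i$ end-extends $\pless$. It uses $V[g_i]\cap V[g_j]=V$ to push every vertex where the cycle switches models into $V$, and then contradicts $\pless$. The paper does this by induction on the number of $\psbelow$-edges rather than by your one-shot collapse of segments. Your version has the small advantage of handling non-strict edges explicitly, whereas the paper phrases the cycle in the strict relations only. You should state that your Szpilrajn--Hansson step preserves the whole strict part of $S$, not just $\psbelow$. That is what makes the result end-extend each $p_i$, and your cycle argument already supplies it.

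In (2), the paper gets $\langle Q,\dot R\rangle\balanceEq\langle\Coll{\omega}{X},\check{\pless}\rangle$ by asserting $\langle\Coll{\omega}{X},\check{\pless}\rangle\le\langle Q,\dot R\rangle$ and quoting Proposition~\ref{lem:balance-eq}. You instead verify the definition of $\balanceEq$ directly, by gluing a condition from $\dot R$ with one from $\check{\pless}$ taken in mutually generic models. The paper's route is a one-line reduction to a general fact. Yours is longer, but it only ever compares conditions living in mutually generic models, which is exactly the situation your rerouting argument handles.

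Your route needs one extra move. The rerouting in (1) used $X^V\subseteq\dom(p_i)$ so that the funicular interpolants are available to both sides. That holds for conditions extending $\pless$, but not necessarily for a condition coming from $\dot R$, which may be countable in a model where $X^V$ is not. To fix this, adjoin a $\Coll{\omega}{X}$-generic over the joint model to that side; this preserves mutual genericity. Then extend the condition to one whose domain contains $X^V$, and observe by your decision argument that it agrees with $\pless$ there. Compatibility of the extended conditions then descends to the original ones by analytic absoluteness.
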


\begin{proof}
(\ref{total-balance}) Working in $V$, let $\pless$ be a prelinearization of $\pbelow$, which exists by Lemma~\ref{prelin-extend}.
Fix pairwise mutually-generic filters $\{g_k \mathrel{:} k < 3\}$ and conditions $R_k \in V[g_k]$ with each $R_k \in \PrelinPoset[\pbelow]^{V[g_k]}$ (which makes sense because $\PrelinPoset[\pbelow]$ is Suslin)
and $\pless \subseteq R_k$.
Let $V[K]$ be a generic extension with $g_k \in V[K]$ for $k < 3$ and work now in $V[K]$.
If the conditions $\{ R_k \mathrel{:} k < 3 \}$ have no common lower bound then there is a directed cycle in the relation $R^s_0 \cup R^s_1 \cup R^s_2 \cup \psbelow$,
where for $k < 3$ $R^s_k = R^s_k \cap \neg \breve{R^s_k}$ is the strict form of $R_k$ and $\psbelow$ is the strict form of $\pbelow$.
Note that by transitivity of the component relations we may assume that no two successive edges of the directed cycle satisfy the same relation among $\{ R^s_0, R^s_1, R^s_2, \psbelow \}$.

We proceed by induction on the number of occurrences of the relation $\psbelow$ in the cycle.
If $\psbelow$ does not occur in the cycle, then because the same relation among the $R^s_k$'s does not occur twice consecutively,
every vertex of the cycle is in the intersection of two distinct models of the form $V[g_k]$ and hence by mutual genericity is in $V$.
But each strict prelinearization $R^s_k$ agrees with $\psless$ for all pairs of elements of $V$, so there is a cycle in $\psless$, the strict version of $\pless$, which is impossible.
Now suppose that there are no cycles with $n$ occurrences of $\psbelow$, and suppose we have a cycle $C$ with $n+1$ occurrences of $\psbelow$.
Let $x \psbelow y$ be an occurrence of $\psbelow$ in $C$.
Because $\pbelow$ is funicular, there is $z \in V$ with $x \pbelow z \pbelow y$
Suppose $x \in V[g_k]$ and $y \in V[g_i]$; then $x R_k z R_i y$, and at least one of these relations is strict.
If one of them is non-strict we can remove the point which isn't $z$ from the cycle.
In any case we obtain a cycle with $n$ occurrences of $\psbelow$, and by induction there is a cycle with all edges in $\psless$, which is impossible.

(\ref{total-classifies}) Suppose $\langle Q, \dot R \rangle$ is $(3,2)$-balanced.
Then for every pair $x, y \in X$, either $Q \Vdash \check x \mathrel{\dot R} \check y$ or $Q \Vdash \check y \mathrel{\dot R} \check x$,
as otherwise there would be incompatible conditions in mutually generic extensions realizing $\dot R$.
Let $\pless$ be the relation on $X^V$ determined by $\dot R$, and note that this is a prelinearization of $\pbelow$ in $V$.
Because $Q$ forces that $\dot R \in \PrelinPoset[\pbelow]$ is countable, $\langle \Coll{\omega}{X}, \pless \rangle \le \langle Q, \dot R \rangle$, and therefore these are balance-equivalent by lemma~\ref{lem:balance-eq}.

(\ref{prelin-balanced}) What remains is to show that for every $p \in \PrelinPoset[\pbelow]$ there is a $(3,2)$-balanced virtual condition $\overline p \le p$.
Fixing $p$, by Lemma~\ref{prelin-extend} there is a total prelinearization $\pless$ extending $p$, and then $\langle \Coll{\omega}{X}, \pless \rangle \le p$ is $(3,2)$-balanced.
\end{proof}

\begin{cor}
Let $\pbelow$ be a Borel funicular preorder.
If $\ZF$ plus the existence of an inaccessible cardinal is consistent, then so is the theory
\[ \ZF + \text{``there is a prelinearization of $\pbelow$''} + \text{``all ultrafilters over $\omega$ are principal''}. \]
\end{cor}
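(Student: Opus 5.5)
The plan is to start from a model of $\ZFC$ with an inaccessible cardinal $\lambda$. If we begin only with $\ZF$ plus an inaccessible, we first pass to $L$: an inaccessible of $V$ remains inaccessible in $L$, and $L$ satisfies $\ZFC$. The Borel code of $\pbelow$ is a real, so in order to keep $\pbelow$ available we work instead in $L[a]$, where $a$ codes $\pbelow$; again $\lambda$ stays inaccessible there. In this model we form the Solovay model $W = \HOD^{V[g]}_{\reals \cup V}$ for $g$ a $\Coll{\omega}{\lambda}$-generic filter. Note that $W$ satisfies $\ZF+\DC$ and contains the code of $\pbelow$. We then force over $W$ with $\PrelinPoset[\pbelow]$ as computed in $W$, obtaining a generic filter $G$, and take the final model to be $W[G]$.

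Two things have to be checked in $W[G]$. First, $W[G]$ contains a prelinearization of $\pbelow$ on all of $X$. Take $\pless = \bigcup G$; more precisely, $\pless$ is the union of the relations enumerated by the conditions in $G$. This is a prelinear order on its domain that respects $\pbelow$ and $\psbelow$, because any finitely many pairs in it lie in a single condition of the filter. It remains to see that the domain is all of $X^{W[G]}$. By Lemma~\ref{prelin-extend} the forcing is $\sigma$-closed, so it adds no new reals and $X^{W[G]} = X^W$. Hence it suffices that for each $x \in X^W$ the set of conditions with $x$ in their domain is dense. Given a condition $R$ and a point $x \notin \dom(R)$, we want to adjoin $x$. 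Working in $W$, which satisfies $\DC$, we apply the density argument of Lemma~\ref{lemma: funicular implies 32balanced}(\ref{prelin-balanced}) inside $V[g\restriction\alpha]$ for a suitable $\alpha<\lambda$. Alternatively, we place $x$ directly: we put $x$ above every $y\in\dom(R)$ with $y \pbelow x$, and so on. This is a countable Szpilrajn-type extension, which can be carried out with countable choice because $\dom(R)\cup\{x\}$ is countable. I expect this step to be easy. The extension is an instance of Theorem~\ref{lemma: prereq preorder can be extended to prelin} applied to a countable set, where choice is not needed.

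Second, all ultrafilters over $\omega$ are principal in $W[G]$. This is exactly Proposition~\ref{three-two-balance-all-uf-principal}, provided $\PrelinPoset[\pbelow]$ is a $(3,2)$-balanced Suslin forcing. Lemma~\ref{prelin-extend} gives that it is Suslin, and Lemma~\ref{lemma: funicular implies 32balanced}(\ref{prelin-balanced}) gives $(3,2)$-balance, using the funicularity of $\pbelow$. The main subtlety is that funicularity is stated in terms of the ground model, so it must hold in the model where the balance argument is run. Since funicularity quantifies over all forcings, we need it in the ground model over which $W$ is built, namely $L[a]$. I would argue that the proofs of funicularity in the cases of interest are $\ZFC$ arguments, so they hold in every model containing the code. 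In general, we simply assume that $\pbelow$ is funicular in the ground model used, which is the intended reading of the corollary.

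Finally, the argument is a relative consistency proof. Everything above is carried out inside a model of $\ZFC$ with an inaccessible cardinal, and the statements verified are first-order facts about $W[G]$. Therefore $\mathrm{Con}(\ZF + \text{inaccessible})$ implies the consistency of the displayed theory, and in fact the consistency of that theory augmented with $\DC$.
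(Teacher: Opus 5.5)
This is essentially the paper's proof: you build the Solovay model $W$ over a ground model with an inaccessible, force with $\PrelinPoset[\pbelow]$, get totality of $\bigcup G$ by a density argument, and get $\neg\Uf(\omega)$ from Lemma~\ref{lemma: funicular implies 32balanced}(\ref{prelin-balanced}) together with Proposition~\ref{three-two-balance-all-uf-principal}; your additions (passing to $L[a]$, the one-point extension for density, and the observation that funicularity is needed in the ground model actually used) are sound refinements of points the paper leaves implicit. One slip to fix: the collapse must be the L\'evy collapse $\Coll{\omega}{<\lambda}$, as in the paper's proof of this corollary and as your use of $g\restriction\alpha$ for $\alpha<\lambda$ already presupposes, because with $\Coll{\omega}{\lambda}$ the model $\HOD^{V[g]}_{\reals\cup V}$ is all of $V[g]$, which satisfies choice, so it and its $\sigma$-closed extension keep their nonprincipal ultrafilters.
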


\begin{proof}
Fix an inaccessible cardinal $\kappa$, and let $\tilde V$ be a generic extension of $V$ by the forcing $\Coll{\aleph_0}{<\kappa}$ and $W \subseteq \tilde V$ be the Solovay model derived from $\kappa$.
Consider the model $W[g]$, where $g$ is generic over $\tilde V$ (and hence also generic over $W$) for $\PrelinPoset = \PrelinPoset[\pbelow]$.
Certainly $W[g] \models \ZF$ because $W \models \ZF$.
A straightforward genericity argument shows that in $W[g]$, $\cup g$ is a prelinearization of $\pbelow$.
The fact that in $W[g]$ all ultrafilters over $\omega$ are principal follows from a combination of the preceding lemma, which established that $\PrelinPoset$ is $(3,2)$-balanced, and Proposition~\ref{three-two-balance-all-uf-principal},
which states that in $(3,2)$-balanced generic extensions of $W$, all ultrafilters over $\omega$ are principal.
\end{proof}

\section{Eliminating the inaccessible} \label{section: no inaccessible}
The goal of this section is to reproduce the arguments in the last sections in a different context---one in which the inaccessible is not needed. The advantage of geometric set theory approach is that we can use machinery already constructed, and this part will be more \textit{ad hoc} while working towards a stronger result.

The models in this section will be inner models of $V[g][h]$ where $g$ is a $\mathbf{Q}$-generic filter over $V$, and $h$ is a $\mathbf{P}$-generic filter over $V[g]$.
In this section $\mathbf{Q}$ denotes $\mathbf{C}(\omega_1)$ and $g$ denotes a $\mathbf{Q}$-generic filter over G\"odel's constructible universe $L$.
Mostly $\mathbf{P}$ will be a forcing notion approximating the prelinearization considered, which we will denote $\Pprelin$.

\subsection{Free ultrafilters and $(m,n)$-amalgamation.}

\begin{dfn}
	Let $\mathbf{P}$ be a forcing poset in $L[g]$ of the form 
	\[p \in \mathbf{P} \iff \exists x \in \RR\; L[x]\models \text{``$p\subseteq \RR$ and $\psi(p)$''}, \]
	ordered by reverse inclusion, and such that for every $p$, any real witness $x$ for being a condition is definable from a finite set of elements of $p$. 
	Then we say that $\mathbf{P}$ is a \emph{local forcing}.
\end{dfn}

Notice that the definability condition implies that ``$p\in \mathbf{P}$'' is absolute between inner models and not only upwards absolute. 
Recall as well that for every real $x$ in $L[g] $ there is a $\gamma<\omega_1$ such that $x\in L[g\restriction\gamma]$ (see Lemma~\ref{lemma: prereq any real is in an inital segment of C(omega1)}), and that for every $\gamma<\omega_1$ there is a real $x$ such that $L[x]=L[g\restriction \gamma]$.

\begin{dfn}
    Let $\mathbf{P}$ be a local forcing in $L[g]$ and $n,m\in \omega$ with $n\geq m>0$.
	We say that $\mathbf{P}$ satisfies \emph{$(n,m)$-amalgamation} if and only if for all $\beta < \omega_1$ there is a dense set of $p\in \mathbf{P}$ such that 
	\begin{enumerate}[i.]
		\item there is some $\gamma$, with $\beta\leq \gamma < \omega_1$, such that $L[g\restriction \gamma]\models p \subseteq \RR \text{ and } \psi(p)$, \label{item: def - extendability}
		and 
		\item \label{item: amalgamation th ultrafilter} for all $g_0, \dots, g_{n-1}$ $\mathbf{Q}$-generic filters over $L[g\restriction\gamma]$ such that any $m$ of these are mutually generic, and $\bigcap_{i\in n} L[g_i]=L[g\restriction\gamma]$; and 
	  for every set of conditions $\{p_i\}_{i\in n}$ extending $p$ such that $p_i \in \mathbf{P}^{L[g_i]}$ for every $i\in n$, the conditions $\{p_i\}_{i<n}$ are compatible in $V[g]$. 
	\end{enumerate}
\end{dfn}

Notice that the condition $\bigcap_{i\in n} L[g_i]=L[g\restriction\gamma]$ is redundant when $m>1$ since it follows from $m$-mutual genericity.  

Notice the similarities of $(n,m)$-amalgamation with the concepts described in the past sections: for $n=m=2$ it is similar to \emph{balanced}, for $n=3, m=2$ it is like \emph{$(3,2)$-balanced}, $n=2, m=1$ is analogous to \emph{placid}. 
Also, a version of amalgamation which corresponds roughly to the $(2,2)$-amalgamation defined here is explored in~\cite[Definition 3.10 \& Theorem 3.11]{fatalini-unit-circle-partition} as a sufficient criterion for extensions not adding any wellorder of the reals.
The usefulness of amalgamation to us is to guarantee that models contain no nonprincipal
ultrafilter over $\omega$; see Theorem~\ref{th: no ultrafilter no inaccessible}.

Observe that $(n,m)$-amalgamation implies $(n,m')$-amalgamation for $n \ge m'\geq m$. 

\begin{thm} \label{th: no ultrafilter no inaccessible} 
	Let $\mathbf{P}$ be a local forcing poset in $L[g]$ that satisfies $(n,n-1)$-amalgamation for some $n>1$ and is $\sigma$-closed.
	Let $h$ be a $\mathbf{P}$-generic filter over $L[g]$ and $\mathcal{P}=\cup h$. 
	Then 
	\[L(\RR, \mathcal{P})^{L[g,h]}  \models \mathsf{ZF} + \mathsf{DC}+\neg\Uf(\omega). \]
\end{thm}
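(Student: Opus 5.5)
The proof splits into two parts. First we check that $L(\RR,\mathcal P)^{L[g,h]}$ satisfies $\mathsf{ZF}+\mathsf{DC}$. Then we show $\neg\Uf(\omega)$ by a symmetry argument built on amalgamation.

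$\mathsf{ZF}$ holds in any model of the form $L(A)$. For $\mathsf{DC}$, note that $\mathbf{Q}=\mathbf{C}(\omega_1)$ is ccc, so $L[g]$ satisfies $\mathsf{AC}$. Since $\mathbf{P}$ is $\sigma$-closed in $L[g]$, it adds no new reals or countable sequences, so $\RR^{L[g,h]}=\RR^{L[g]}$ and $L[g,h]\models\mathsf{AC}$. Every element of $L(\RR,\mathcal P)$ is definable from $\mathcal P$, a real and ordinals. Hence any countable sequence of elements of $L(\RR,\mathcal P)$ lying in $L[g,h]$ already belongs to $L(\RR,\mathcal P)$: $\omega$-sequences of reals code as a single real, and an $\omega$-sequence of ordinals also lies in $L[g]$. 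The usual argument for $L(\RR)$ then gives $\mathsf{DC}$. To extract the witnessing sequences inside the model, we use the $\mathsf{AC}$ of $L[g,h]$ together with this closure under $\omega$-sequences.

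For $\neg\Uf(\omega)$, suppose toward a contradiction that $U$ is a nonprincipal ultrafilter in $L(\RR,\mathcal P)$. Then $U$ is definable there from $\mathcal P$, a real $r$ and ordinals. By Lemma~\ref{lemma: prereq any real is in an inital segment of C(omega1)}, fix $\beta<\omega_1$ with $r\in L[g\restriction\beta]$. Use density to pick $p\in h$ as in the definition of $(n,n-1)$-amalgamation for $\beta$, with witness $\gamma\ge\beta$, and arrange that $p$ forces ``$\dot U$, defined from $\check r$ and the ordinals, is a nonprincipal ultrafilter.'' The point is that over $L[g\restriction\gamma]$ the remaining generic $g\restriction[\gamma,\omega_1)$ can be re-arranged by automorphisms and re-factorisations of $\mathbf{C}(\omega_1)$ (Theorem~\ref{thm:Cohen-properties}). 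So whether $p$ forces a given real $a\subseteq\omega$, added over $L[g\restriction\gamma]$ by some initial segment, to lie in $\dot U$ depends only on the Cohen type of $a$ over $L[g\restriction\gamma]$ and on $p$. Here amalgamation plays the role of homogeneity: any condition extending $p$ that lives in a Cohen-generic extension can be amalgamated with the part of $h$ coming from another generic.

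The combinatorial core imitates the $(3,2)$-balanced argument of Larson--Zapletal. Over $L[g\restriction\gamma]$, take $n$ subsets $a_0,\dots,a_{n-1}\subseteq\omega$ built from Cohen reals, in $\mathbf{Q}$-generic extensions $L[g_i]$, any $n-1$ of them mutually generic. For instance, let $c_0,\dots,c_{n-2}$ be mutually generic Cohen reals viewed as subsets of $\omega$. Put $a_i=c_i$ for $i<n-1$, and let $a_{n-1}$ be the set of $k$ lying in an even number of the $c_i$. Then any $n-1$ of the $a_i$ are mutually generic, and $\bigcap_i a_i$-type parity relations make $\{a_i\}$ and $\{\omega\setminus a_i\}$ impossible to all belong to one ultrafilter. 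For each $i$, the model $L[g_i]$ contains $a_i$ and a condition $p_i\le p$ in $\mathbf{P}^{L[g_i]}$ deciding ``$\check a_i\in\dot U$''. By the homogeneity observation, all $p_i$ decide this the same way, since each $a_i$ is Cohen over $L[g\restriction\gamma]$. By $(n,n-1)$-amalgamation the $p_i$ have a common extension $q$ in $L[g]$. Taking $q$ into a generic filter and using the absoluteness of $U$'s definition, either all $a_i$ lie in $U$ or all complements do; both contradict the parity identity together with the finite intersection property.

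The main obstacle is this homogeneity/symmetry step. We must check that whether $p$ forces $\check a\in\dot U$ really depends only on $a$ being Cohen over $L[g\restriction\gamma]$, uniformly across the different $L[g_i]$. We also need the $p_i$ in the models $L[g_i]$ to be transferable into conditions of $\mathbf P$ in $L[g]$ that still force the relevant statements. For both, I would use that $\mathbf P$ is local, so its conditions and the forcing relation are absolute between $L[g_i]$ and $L[g]$, together with the weak homogeneity of $\mathbf{C}(\omega_1)$ over $L[g\restriction\gamma]$ and the absoluteness of ``$p\in\mathbf P$.''
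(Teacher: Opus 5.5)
\paragraph{Verdict.} The $\mathsf{ZF}+\mathsf{DC}$ part is fine. You should add why an $\omega$-sequence of ordinals from $L[g]$ lies in $L(\RR,\mathcal P)$; ccc covering by a countable set in $L$ reduces it to a real. The genuine gap is the step ``by the homogeneity observation, all $p_i$ decide this the same way.''

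\paragraph{Why the homogeneity step fails.} Homogeneity of $\mathbf{C}(\omega_1)$ over $L[g\restriction\gamma]$ only constrains what the single condition $p\in L[g\restriction\gamma]$ forces. In fact $p$ decides nothing of this kind. Suppose $p\Vdash\check a\in\dot U$ for a Cohen real $a$ over $L[g\restriction\gamma]$. Then some finite $t\subseteq a$ forces this over $L[g\restriction\gamma]$. The real that agrees with $a$ on $\lh(t)$ and is complemented afterwards is then also forced into $\dot U$, contradicting nonprincipality. The $p_i$, by contrast, are separately chosen extensions of $p$ living in different models $L[g_i]$. Nothing you say prevents $p_0$ from forcing $a_0\in\dot U$ while $p_1$ forces $a_1\notin\dot U$.

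Without control of these signs there is no contradiction. Since $a_0,\dots,a_{n-2}$ are mutually generic, every sign pattern on them has infinite intersection, and one of the two signs on $a_{n-1}$ keeps it infinite. Your parity sets also fail in the uniform case:
\begin{itemize}
\item for odd $n$, in particular $n=3$, $\bigcap_i a_i=\bigcap_{i<n-1}c_i$ is infinite;
\item for $n=2$ you get $a_1=\omega\setminus a_0$, so $a_0\in L[g_0]\cap L[g_1]$, and the amalgamation hypothesis $\bigcap_i L[g_i]=L[g\restriction\gamma]$ fails.
\end{itemize}

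\paragraph{The missing idea.} Make all the $p_i$ interpretations of one name through one condition. This is how the paper gets uniformity.
\begin{enumerate}
\item Take the $\gamma$-th Cohen real $r$ of $g$ itself. Without loss of generality $r^{-1}(0)\in U$ in $L[g,h]$; pick $\bar p\le p$ forcing this.
\item Find $q\in\mathbf C(\omega_1\setminus\gamma)$ and a name $\dot{\bar p}$ such that $q$ forces over $L[g\restriction\gamma]$ that $\dot{\bar p}\le\check p$ and $\dot{\bar p}$ forces the name for $r^{-1}(0)$ into $\dot U$.
\item With $s=q(\gamma)$, force over $L[g\restriction\gamma]$ with the paper's $\mathbf C_s$: $n$-tuples of Cohen conditions extending $s$, of equal length, with no common zero beyond $\lh(s)$. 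Its coordinates are Cohen reals $r_0,\dots,r_{n-1}$ such that any $n-1$ are mutually generic, $\bigcap_i L[g\restriction\gamma][r_i]=L[g\restriction\gamma]$ (this needs a separate argument when $n=2$), and $\bigcap_i r_i^{-1}(0)$ is finite.
\item Add mutually generic tails containing $q\restriction(\omega_1\setminus(\gamma+1))$. This gives $\mathbf Q$-generics $g_i$ all containing $q$.
\end{enumerate}
Now the interpretations $p_i$ of $\dot{\bar p}$ all force the \emph{same} statement, $r_i^{-1}(0)\in\dot U$. Only one sign ever has to be refuted, and the combinatorics reduces to a finite common zero set. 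The remaining steps are as you anticipated: amalgamation in $L[g]$, and transferring ``$p_i\Vdash\ldots$'' from $L[g_i]$ to $L[g]$ by homogeneity, viewing $L[g]$ as a $\mathbf Q$-extension of $L[g_i\restriction\delta]$.
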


\begin{proof}
Notice that $L(\RR, \mathcal{P})^{L[g,h]}  \models \mathsf{ZF}+\mathsf{DC}$ because $\mathbf{P}$ is $\sigma$-closed. 
We only need to show that $L(\RR, \mathcal{P})^{V[g,h]}  \models \neg\Uf(\omega)$.

	\textbf{Step I:} Suppose the contrary.
	Then there is some formula $\phi(\cdot, \vec{x}, \vec{\alpha}, \mathcal{P})$, where $\vec{x} \subseteq \RR^{V[g,h]}$ and $\vec{\alpha}\subseteq \OR$, such that
\begin{equation} \label{eq: ultrafilter step1}
	L[g,h] \models \phi \left(\cdot, \vec{x}, \vec{\alpha}, \mathcal{P}\right) \text{ defines a non principal ultrafilter on } \omega.
\end{equation}
Since $\mathbf{P}$ is $\sigma$-closed, $\RR^{L[g,h]}=\RR^{L[g]}$. 
Recall that any real in $L[g]$ is in an inital segment of $g$ (see Lemma~\ref{lemma: prereq any real is in an inital segment of C(omega1)}), that is, there is $\beta<\omega_1$ such that $\vec{x}\in \RR^{L[g\restriction\beta]}$, and we can assume $\beta > \max\vec{\alpha}$.

There is  a condition $p$ that forces the statement \eqref{eq: ultrafilter step1}, i.e.
\begin{equation}\label{eq: ultrafilter step2}
	p \forc{\mathbf{P}}{L[g]} \phi \left(\cdot, \check{\vec{x}}, \check{\vec{\alpha}},\dot{\mathcal{P}}\right) \text{ defines a non principal ultrafilter on } \omega, 
\end{equation}
where $\dot{\mathcal{P}}$ is a definable name for $\set{ \tup{\check{x},p} \mid x\in p \text{ and }p\in \mathbf{P}}$. 
Henceforth every time we write $\mathbf{P}$ we are thinking about its explicit formula defining it as a local forcing, which as it was noticed has the property that belonging to $\mathbf{P}$ is absolute bewteen inner models.

By hypothesis we can assume that $p$ has the properties described in the definition of $(n+1,n)$-amalgamation, in particular,  $p \in L[g\restriction	\gamma]$ for some $\gamma > \beta$. 
Consider the forcing $\mathbf{Q}$ as adding $\omega_1$ Cohen functions from $\omega$ to $2$.

Now consider the $\gamma^{th}$ Cohen real defined by $g$ and call it $r$.
Notice that $r\colon \omega \to 2$, so 
\[ r^{-1}(0)  \dot{\cup} r^{-1}(1)= \omega. \]

Therefore, in $L[g,h]$ one of these sets, call it $\bar{r}$, must be in the ultrafilter defined by $\phi$. 
Without loss of generality assume $\bar{r} = r^{-1}(0)$.
Pick $\bar{p}\leq p$ such that $\bar{p}$ forces that $\bar{r}\subseteq \omega$ is in the ultrafilter, which is to say, 

\begin{equation}\label{eq: ultrafilter step3}
	\bar{p} \forc{\mathbf{P}}{L[g]} \phi \left(  \check{\bar{r}}, \check{\vec{x}}, \check{\vec{\alpha}},\dot{\mathcal{P}}\right).
\end{equation}

By definability of forcing there is some formula that expresses the forcing relation described in \eqref{eq: ultrafilter step3} in $L[g]$.  
Now write $\mathbf{Q}$ as $\mathbf{C}(\gamma) \times \mathbf{C}(\omega_1\backslash \gamma)$ and think of $g$ as
\[g= (g\restriction \gamma) \times g \restriction (\omega_1 \backslash \gamma).\]
Then there is a condition $q\in \mathbf{C}(\omega_1 \backslash \gamma)$ such that 

\begin{equation} \label{eq: ultrafilter step4}
	q \forc{\mathbf{C}(\omega_1\backslash \gamma)}{L[g\restriction \gamma]}  ``
	 \dot{\bar{p}}\leq_{P} \check{p} \text{ and } 
	\dot{\bar{p}} \forc{\mathbf{P}}{L[g\restriction\gamma, \dot g \restriction (\omega_1 \setminus \gamma)]} \phi \left(  \dot{\check{\bar{r}}}, \check{\check{\vec{x}}}, \check{\check{\vec{\alpha}}},\dot{\mathcal{P}} \right).'' 
\end{equation}
where $\dot{\bar{p}}$ is a $\mathbf{C}(\omega_1\backslash \gamma)$-name for $\bar{p}$ and $\dot{\check{\bar{r}}}$ the following $\mathbf{C}(\omega_1\backslash \gamma)$-name for $\check{\bar{r}}$: 

\begin{equation} \label{eq:def-r-bar-check-dot}
\dot{\check{\bar{r}}} = \left\{ 
\tupp{\tupp{\check{k}, \text{$\mathbbm{1}$} _{\mathbf{P}}}^{\vee}, t} \mid t\in \mathbf{C}(\omega_1\backslash \gamma), \tupp{k, 0}\in t(\gamma) \right\}.
\end{equation}

Notice that $\gamma > \beta$ implies that $\vec{\alpha}, \vec{x}\in L[g\restriction\beta] \subseteq L[g\restriction\gamma]$, and recall that $p\in L[g\restriction \gamma]$. \\

\textbf{Step II:} 
We want to get a contradiction from \eqref{eq: ultrafilter step4}. 
The idea is to extend $L[g\restriction \gamma]$ in different (but still amalgamable) ways so that we get $n$ sets which are in the corresponding ultrafilter, but whose intersection is finite, a contradiction.
This step consists in carefully constructing such extensions. 

Let $s$ be $q(\gamma)\in 2^{<\omega}$.
Consider the forcing $\mathbf{C}_s$ in $L[g\restriction \gamma]$ given by 
\begin{align*}
	(a_0\dots,a_{n-1})\in \mathbf{C}_s \iff &  a_0, \dots, a_{n-1} \in \mathbf{C} \text{ with }  a_0, \dots, a_{n-1}  \supseteq s,\\
	& \lh(a_0)=\dots=\lh(a_{n-1}); \text{ and } \\
	& \{l \geq \lh(s) \mid a_0(l)=\dots=a_{n-1}(l)=0\}= \emptyset,
\end{align*}
ordered by reverse inclusion in each coordinate. 

\begin{claim}
	The following hold:
	\begin{enumerate}[i.]
		\item The definition of $\mathbf{C}_s$ is absolute.
		\item $\mathbf{C}_s\cong \mathbf{C}$. 
		\item If $g_s$ is a $\mathbf{C}_s$-generic filter over some transitive model $M$, consider for $i\in n$ the projection $j_i$ of $g_s$ to the $i^{\text{th}}$ coordinate. Then $j_i$ is $\mathbf{C}$-generic over $M$ and $M[g_s]$ is a $\mathbf{C}$-extension of $M[j_i]$ for every $i\in n$.
		\item Every subset of $\{j_i\}_{i\in n}$ of size $n-1$ is a set of mutually generic filters over $M$. \label{item: n-1 mutual genericity}
		\item  $\bigcap_{i\in n} M[j_i]=M$. \label{item: coord of C_s are almost mut ivgeneric}
		\item  If $r_i$ is the Cohen real defined by $j_i$ for each $i\in n$, then $\{l \mid r_0(l)=\dots=r_{n-1}(l)=0 \}$ is finite. 
	\end{enumerate}
\end{claim}

\begin{claimproof}
	\begin{enumerate}[i.]
		\item It is clear. 
		\item Because $\mathbf{C}_s$ is countable and without atoms, $\mathbf{C}_s\cong \mathbf{C}$.
		\item For each $i \in \{1,2\}$, $j_i\subseteq M$ and $j_i\in M[g_s]$, so by
	Solovay's Basis Theorem \cite[Theorem~2 Section~2.14]{Grigorieff1975} we obtain that $M[j_i]$ is a forcing extension of $M$ and  $M[g_s]$ is a forcing extension of $M[j_i]$.
		
		Moreover, consider $r_i=\cup j_i$ for $i\in n$.
		Notice $r_i$ is a real and $M[r_i]=M[j_i]$.
		We know also that $\mathbf{C}_s\cong \mathbf{C}$ so we can think of $M[g_s]$ as a Cohen extension of $M$. 
		Then $r_i$ is a real in a Cohen extension which means $M[r_i]$ is itself a Cohen extension of $M$ and the forcings corresponding to $j_0$ and $j_1$ have to be forcing equivalent to $\mathbf{C}$ (see Theorem \ref{thm:Cohen-properties}~\ref{th: prereq any real in cohen ext is cohen}). 
		
		\item By the symmetry of the coordinates,  it is enough to show that $\{j_i\}_{i\in n-1}$ are mutually generic filters over $M$.
		In other words, we need to prove that $j_0\times\dots\times j_{n-2}$ intersects every dense set $D$ on $\mathbf{C}^n$.  
		Fix such $D$ and define $D'\subseteq \mathbf{C}_s$ as follows. 
		
		\[D'=\{(a_i)_{i\in n}\in \mathbf{C}_s \,|\, (a_i)_{i\in n-1}\in D \}.\]
		
		We claim that $D'$ is dense in $\mathbf{C}_s$. 
		Fix $\vec{b}\in \mathbf{C}_s$ and denote $\vec{b} \restriction (n - 1)$ by $\vec{b}^\ast$.
		By density of $D$, there is $\vec{a}\supseteq \vec{b}^\ast$ such that $\vec{a}\in D$ and we can assume that all the coordinates in $a$ have the same length $l$. 
		Now, let $p\supseteq b_{n-1}$ be a condition in $\mathbf{C}$ of length $l$ such that for all $j$ satisfying $\lh(s)\leq j < l$, if $a_0(j)=\dots=a_{n-2}(j)=0$ then $p(j)=1$. 
		Then clearly $(\vec{a},p)\in \mathbf{C}_s$ and moreover $(\vec{a},p)\in D'$.
		
		Since $D'$ is dense, there is $\vec{c}\in g_s$ such that $\vec{c}\in g_s\cap D'$. Then by definition of $D'$, $\vec{c}^\ast\in (j_0\times\dots\times j_{n-2}) \cap D$, as we wanted.		
		
		\item For $n>2$, this is clear due to the $(n-1)$-mutual-genericity shown in item \ref{item: n-1 mutual genericity}. If $n=2$, then we need to prove that $M[j_0]\cap M[j_1]=M$. 
		Suppose that $y\in M[j_0]\cap M[j_1]$.
		Then there are $\mathbf{C}$-names $\sigma$ and $\tau$ such that $y=\sigma_{j_0}=\tau_{j_1}$. 
		Let us assume by $\in$-induction that $y\subseteq  M$. 
		Since $y\in M[g_s]$, there is a condition $(b,c)\in \mathbf{C}_s$ such that 
		\[(b,c) \forc{\mathbf{C}_s}{M} \tilde{\sigma} = \tilde{\tau} \text{ and } \tilde{\sigma} \subseteq M, \]
		where $\tilde{\sigma}$ and $\tilde{\tau}$ are the $\mathbf{C}_s$-names that resemble $\sigma$ and $\tau$ respectively but each name contains every condition in the second, respectively first, coordinate.
		We will describe $\tilde{\sigma}$ and $\tilde{\tau}$ precisely as follows. 
		
		Given a name $\sigma \in M^\mathbf{C}$, we define $\sigma^{\ast}$ and $^\ast \sigma$ in $M^{\mathbf{C}_s}$ recursively:
		\begin{align*}
			\sigma^\ast &= \{(\pi^\ast,(a_0,a_1)) \mid (a_0,a_1)\in \mathbf{C}_s \text{ and }(\pi, a_0)\in \sigma\}  \\
			^\ast\sigma &= \{({^\ast}\pi,(a_0,a_1)) \mid (a_0,a_1)\in \mathbf{C}_s \text{ and }(\pi, a_1)\in \sigma\}
		\end{align*}
		Set $\tilde{\sigma}= \sigma^\ast$ and $\tilde{\tau}={}^{\ast}\tau$. 
		
		We claim that $b$ decides ``$\check{z}\in \sigma$'' for every $z\in M$. 
		If this holds, then $y=\sigma_{j_1}\in M$.
	
		Suppose it does not. 
		Then there is some $z\in M$ for which $b$ does not decide whether $\check{z}$ belongs to $\sigma$.
		Pick $b_0$, $b_1\in \mathbf{C}$ extending $b$ such that 
		\begin{align*}
			b_0 & \forc{\mathbf{C}}{M}  \check{z} \not\in \sigma,  \text{ and} \\
			b_1 & \forc{\mathbf{C}}{M} \check{z} \in \sigma.
		\end{align*}				
		Let $c'$ be the extension of $c$ to length $\max \{\lh(b_0), \lh(b_1)\}$ such that $c'=c{}^\frown \vec{1}$. 
		Extend $c'$ to $c''$ so that $c'$ decides ``$\check{z}\in \tau$''.
		Let us assume without loss of generality that $c''\forc{\mathbf{C}}{M} \check{z}\not \in \tau$.
		Then let $b'$ be the extension of $b_1$ of length $\lh(c'')\geq \lh(b_1)$ such that $b'=b_1{}^\frown \vec{1}$, as Figure \ref{fig: ultrafilter claim Cs steps} shows.
		If $c'' \forc{\mathbf{C}}{M} \check{z} \in \tau$, we would have taken $b'=b_0{}^\frown \vec{1}$.
		By construction, $(b', c'')\in \mathbf{C}_s$ and $(b', c'')\forc{\mathbf{C}_s}{M} \check{z} \in \tilde{\sigma} \wedge \check{z} \not\in \tilde{\tau}$.
        
		\begin{figure}[H] 
		    \centering
		    \includegraphics[width=0.7\linewidth]{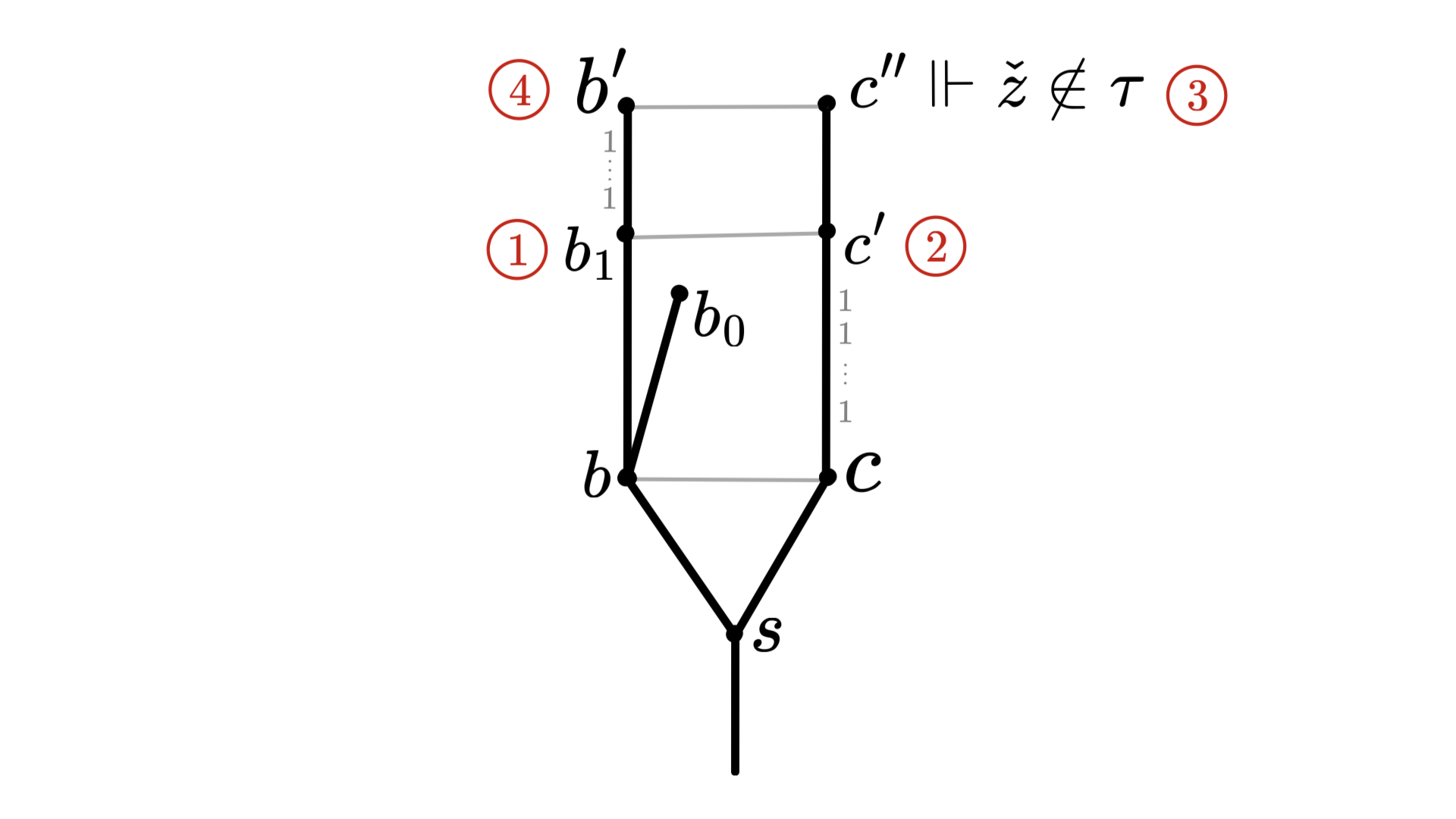}
		    \caption{$(b', c'')\in \mathbf{C}_s$, but $b'\Vdash\check{z}\in \sigma$ and $c''\Vdash \check{z}\not \in \tau$.}
		    \label{fig: ultrafilter claim Cs steps}
		\end{figure}
	
		This is a contradiction, since $(b',c'')\leq (b,c)$. 
		\item 	Let $r_i$ be the real added by $j_i$ for each $i\in n$. 
		Then  $\{n \mid r_0(l)=\dots=r_{n-1}(l)=0\}=\{l \mid s(l)=0\}$, which is finite.	
	\end{enumerate}
\end{claimproof}

\textbf{Step III:}
Let $g_s$ be a $\mathbf{C}_s$-generic filter over $L[g\restriction \gamma]$ and let $\tilde{g}_0,\dots,\tilde{g}_{n-1}$ be $n$ $\mathbf{C}(\omega_1\backslash (\gamma+1))$-mutually generic filters over $L[g\restriction \gamma, g_s]$, each containing $q\restriction(\omega_1\backslash (\gamma+1))$.
This can be done so that $L[g\restriction\gamma, g_s, \tilde{g}_0,\dots,\tilde{g}_{n-1}] \subseteq L[g]$.
For each $i\in n$ we have that $\tilde{g}_i$ is also $\mathbf{C}(\omega_1\backslash (\gamma+1))$-generic over $L[g\restriction \gamma, j_i]$. 
In other words, $j_i \times \tilde{g}_i$ is $\mathbf{C}(\omega_1\backslash \gamma)$-generic over $L[g\restriction \gamma]$ and also $q\in j_i \times \tilde{g}_i$ for all $i\in n$.

We have now our $n$ different (but amalgamable) paths: \\

\begin{figure}[H] \centering
	\begin{tikzcd}
		& &L[g\restriction \gamma][j_0][\tilde{g}_0] \arrow[rrd, hookrightarrow, end anchor= real west, start anchor=base east, "\subseteq", sloped]&& \\
		L[g\restriction \gamma]  \arrow[rru, hookrightarrow, end anchor=base west, start anchor=east, "\subseteq", sloped] \arrow[rrd, hookrightarrow, end anchor=base west, start anchor=base east, "\subseteq", sloped]&&\vdots&& L[g\restriction \gamma][g_s][\tilde{g}_0,\dots, \tilde{g}_{n-1}] \\
		&&L[g\restriction \gamma][j_{n-1}][\tilde{g}_{n-1}] \arrow[rru, hookrightarrow, end anchor=base west, start anchor=base east, "\subseteq", sloped]&&
	\end{tikzcd}
\end{figure}

For $i\in n$ let $g_i$ be $(g\restriction \gamma) \times j_i \times \tilde{g}_i$. 

Now, recall \eqref{eq: ultrafilter step4}: 
\begin{equation*}
	q \forc{\mathbf{C}(\omega_1\backslash \gamma)}{L[g\restriction \gamma]}  
\text{``$\dot{\bar{p}}\leq_{P} \check{p}$ and 
	$\dot{\bar{p}} \forc{\mathbf{P}}{L[g\restriction\gamma, \cdot]} \phi \left(  \dot{\check{\bar{r}}}, \check{\check{\vec{x}}}, \check{\check{\vec{\alpha}}},\dot{\mathcal{P}} \right)$.''}
\end{equation*}

Here $\dot{\check{\bar{r}}}$ is as in~\eqref{eq:def-r-bar-check-dot}.
Let $p_i= (\dot{\bar{p}})_{j_i\times \tilde{g_i}}$ for all $i\in n$.
Looking at the definition of $\dot{\check{\bar{r}}}$ and computing $(\dot{\check{\bar{r}}})_{g_i}$, we know that $(\dot{\check{\bar{r}}})_{j_i\times \tilde{g}_i}$ is just the check name of the preimage of $0$ of the real $r_i$ in $L[g_i]=L[g\restriction \gamma, j_i, \tilde{g}_i]$.
Let us then write $\check{\bar{r}}_i=(\dot{\check{\bar{r}}})_{j_i\times \tilde{g_i}}$.
Then we get that $p_i\leq_\mathbf{P} p$ in $L[g]$ (and in any other model that has $p$ and $ p_i$ as elements, since this relation is absolute).
Moreover, for each $i\in n$
\begin{equation}
	p_i \forc{\mathbf{P}}{L[g_i]} \phi \left(  \check{\bar{r}}_i, \check{\vec{x}}, \check{\vec{\alpha}},\dot{\mathcal{P}}\right). \label{eq: ultrafilter step5}
\end{equation}

Notice that $\bigcap_{i\in n}L[g_i]= L[g\restriction\gamma]$ using the item \ref{item: coord of C_s are almost mut ivgeneric} of the claim and mutual genericity of $\{\tilde{g}_i\}_{i\in n}$. 
The hypothesis of $(n,n-1)$-amalgamation now implies that the conditions $\{p_i\}_{i\in n}$ are compatible. 

Fix $i\in n$.  Since $p_i\in \mathbf{P}^{L[g_i]}$, there is $x_i\in \RR^{L[g_i]}$ such that $p_i\in L[x_i]$. 
Because $g_i$ is $\mathbf{Q}$-generic over $L$, there is $\delta < \omega_1$ such that $\delta > \gamma$ and $x_i \in L[g_i\restriction\delta]$. 
Then $p_i\in L[g_i\restriction \delta]$. 
From Equation \eqref{eq: ultrafilter step5} and the homogeneity of $\mathbf{Q}$, we obtain 

\begin{equation}
	\text{$\mathbbm{1}$} _{\mathbf{Q}}\forc{\mathbf{C}(\omega_1\backslash \delta)}{L[g_i\restriction \delta]}
	\check{p}_i \forc{\mathbf{P}}{L[g_i\restriction\delta, \cdot]} \phi\left( \check{\check{\bar{r}}}_i, \check{
		\check{\vec{x}}}, \check{\check{\vec{\alpha}}},\dot{\mathcal{P}}\right).
\end{equation}

Now, consider $g_i\restriction\delta$ as one real.
Then we can think of $L[g]$ as a $\mathbf{Q}$-extension of $L[g_i\restriction \delta]$ (see Theorem \ref{thm:Cohen-properties}~\ref{th: prereq any real in Q extension is Q ground}).
Since $\mathbf{Q}\cong \mathbf{C}(\omega_1\backslash \delta)$, and $i$ was arbitrary, we have that for every $i\in n$,
\begin{equation}
	p_i \forc{\mathbf{P}}{L[g]} \phi \left(  \check{\bar{r}}_i, \check{\vec{x}}, \check{\vec{\alpha}},\dot{\mathcal{P}}\right). \\ \label{eq: ultrafilter step6}
\end{equation}
Remember that each $p_i$ extends $p\in L[g\restriction \gamma]$ for $i\in n$.
Now we have satisfied hypothesis~\ref{item: amalgamation th ultrafilter} in the definition of $(n,n-1)$-amalgamation, and in $L[g]$ the conditions $\{p_i\}_{i\in n}$ are compatible.
Nevertheless these conditions force incompatible statements, namely, $p_i$ forces that $\bar{r}_i$ is an element of the ultrafilter given by $\phi$.
This leads to a contradiction, since $\bigcap_{i\in n} \bar{r}_i$ is finite.

\end{proof}

\subsection{Funicular orders without the inaccessible}

Our goal is to show that for any fixed funicular preorder $\pbelow$ there is a model of set theory with a prelinearization of $\pbelow$ but without a nonprincipal ultrafilter over $\omega$. 
Theorem~\ref{th: no ultrafilter no inaccessible} gives us a family of models in which there are no nonprincipal ultrafilters over $\omega$, but we need to choose an appropriate forcing $\mathbf{P}$ that satisfies the hypothesis of the theorem and adds a corresponding prealinearization $\mathcal{P}$ consisting of approximations by prelinearizations in models of the form $L[x]$. 
Proving that $\mathcal{P}$ will have the properties we need is the role of the following lemma. 

\begin{lemma} \label{lemma: local forcing forces the object}
	Let $\mathbf{P}$ be a local forcing poset in $L[g]$ with $\psi(p)$ of the form
	\[\forall r\in \RR \forall s\in p^{<\omega} \exists t\in p^{<\omega} \Psi(r,s,t), \]
	where $\Psi$ is absolute between transitive models, and such that $\mathbf{P}$ is $\sigma$-closed and satisfies item~\eqref{item: def - extendability} of $(n,m)$-amalgamation.
	Let $h$ be a $\mathbf{P}$-generic filter over $L[g]$ and $\mathcal{P}=\cup h$. 
	Then 
	 \[L(\RR, \mathcal{P})^{L[g,h]}\models \mathcal{P}\subseteq \RR \text{ and } \psi(\mathcal{P}). \]
\end{lemma}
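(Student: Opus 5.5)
The plan is a density argument in $L[g]$. First, since $\mathbf{P}$ is $\sigma$-closed, $\RR^{L[g,h]}=\RR^{L[g]}$, so $\RR$ computed in $L(\RR,\mathcal{P})^{L[g,h]}$ is just $\RR^{L[g]}$. Every condition is a set of reals, so $\mathcal{P}=\cup h\subseteq\RR$ is immediate. Since $\mathcal{P}$ and $\RR$ both lie in $L(\RR,\mathcal{P})$, and $\mathcal{P}^{<\omega}$ is computed the same way there as in $L[g,h]$, the formula $\psi(\mathcal{P})$ only quantifies over $\RR$ and $\mathcal{P}^{<\omega}$. Because $\Psi$ is absolute between transitive models, it suffices to verify $\psi(\mathcal{P})$ in $L[g,h]$.

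So fix $r\in\RR^{L[g]}$ and $s\in\mathcal{P}^{<\omega}$; we need $t\in\mathcal{P}^{<\omega}$ with $\Psi(r,s,t)$. Since $h$ is a filter ordered by reverse inclusion and $s$ is finite, there is $p_0\in h$ with $s\subseteq p_0$. By Lemma~\ref{lemma: prereq any real is in an inital segment of C(omega1)}, $r\in L[g\restriction\beta]$ for some $\beta<\omega_1$. Consider the set $D_\beta$ of conditions given by item~\eqref{item: def - extendability} of $(n,m)$-amalgamation for this $\beta$. This set is dense, so there is $p\in h\cap D_\beta$ with $p\le p_0$. Then $p\supseteq s$, and there is $\gamma\ge\beta$ with $L[g\restriction\gamma]\models$ ``$p\subseteq\RR$ and $\psi(p)$''.

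Now $r\in L[g\restriction\gamma]$ and $s\in p^{<\omega}$, so applying $\psi(p)$ inside $L[g\restriction\gamma]$ yields $t\in p^{<\omega}$ with $L[g\restriction\gamma]\models\Psi(r,s,t)$. By absoluteness of $\Psi$, $\Psi(r,s,t)$ holds in $L[g,h]$ and hence in $L(\RR,\mathcal{P})$. Finally $t\in p^{<\omega}\subseteq\mathcal{P}^{<\omega}$ because $p\in h$. This proves $\psi(\mathcal{P})$.

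The main subtlety is the witness model. The local-forcing definition only guarantees that $\psi(p)$ holds in some $L[x]$, which need not contain the given real $r$, so $\psi(p)$ there says nothing about $r$. This is exactly why item~\eqref{item: def - extendability} is needed: it provides a dense set of conditions whose witness models $L[g\restriction\gamma]$ can be pushed past any prescribed $\beta$, and hence contain $r$. One must also check that the dense set can be met below $p_0$. This holds because $D_\beta$ is dense in $\mathbf{P}$, $D_\beta\in L[g]$, and $h$ is generic over $L[g]$.
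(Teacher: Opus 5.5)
Your proposal is correct and follows essentially the same argument as the paper. You reduce to $L[g,h]$ using $\sigma$-closure and the absoluteness of $\Psi$. You then meet, below a condition containing $s$, a dense set of conditions from item~(i) whose witness model contains $r$. The one difference is that you apply $\psi(p)$ directly in $L[g\restriction\gamma]$, whereas the paper uses the set $D_r=\{\bar p : r\in L[x_{\bar p}]\}$ and the identification $L[x_{\bar p}]=L[g\restriction\gamma]$; your version avoids that identification and is slightly cleaner.
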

	 
\begin{proof}
	This is essentially the second part of the proof of \cite[Theorem~3.11]{fatalini-unit-circle-partition}, noticing that condition \ref{item: def - extendability} in the definition of $(n,m)$-amalgamation plays the role of \emph{extendability} in that paper. However, we include it here for completeness and because we have extended slightly the form of the formula $\psi$.
	
	Let $W$ be the model $L(\RR, \mathcal{P})^{L[g,h]}$. Then it is clear that $\RR^W=\RR^{L[g,h]}$. 
    Since $\mathbf{P}$ is $\sigma$-closed, $\RR^{L[g,h]}=\RR^{L{[g]}}$.

Notice that a formula $\psi$ as in the hypothesis is absolute between transitive models that agree on $\RR$ and contain $\mathcal{P}$ as an element. Since this is true for the models $L(\RR, \mathcal{P})^{L[g,h]}$ and $L[g,h]$, we just need to prove that $L[g,h]$ satisfies $\psi(\mathcal{P})$.
	
	Work inside $L[g,h]$.
	Since $h\subseteq \mathbf{P}$, we have that for all $p\in h$ there is some $x_p\in \RR$ such that 
	\[ L[x_p] \models p\subseteq \RR \text{ and }
\forall r\in \RR \forall s\in p^{<\omega} \exists t\in p^{<\omega} \Psi(r,s,t).  \]
	
From this and the fact that being a real number is absolute, we have that $\mathcal{P}\subseteq \RR$. 
Now, fix $r\in \RR$ and $s\in \mathcal{P}^{<\omega}$.
First, we claim that the set 
	\[D_r= \{ \bar{p}\in \mathbf{P} \mid r\in L[x_{\bar{p}}] \} \]
	is dense, which we prove as follows.
	Fix $p\in \mathbf{P}$.
	There is some $\beta < \omega_1$ such that $r \in L[g\restriction \beta]$. 
	By item~\eqref{item: def - extendability} of the definition of $(n,n+1)$-amalgamation, there is some $\gamma>\beta$ and $\bar{p}\leq_\mathbf{P} p$ such that $L[g\restriction \gamma]\models \text{``$\bar{p}\subseteq \RR$ and $\psi(\bar{p})$''}$.
	Notice that $r\in L[g\restriction\gamma]$ and $L[x_{\bar{p}}]=L[g\restriction\gamma]$ (by definability of $x_{\bar{p}}$ from $\bar{p}$ in the definition of local forcing).
	Therefore $D_r$ is dense.
	Secondly, let $s=\{s_0, \dots, s_{l-1}\}$ where $l\in \omega$.
	Then there is a finite set of conditions $p_0, \dots, p_{l-1}$ in $h$ such that $s_i\in p_i$ for all $i\in l$. 
	Since $h$ is a filter, there is $p\in h$ such that $p\leq_{\mathbf{P}} p_i$ for all $i\in l$. 
	In particular, $p_i \subseteq p$ for all $i\in l$ and $s\subseteq p$. 
	Because $h$ is a generic filter, $p\in h$, and $D_r$ is dense, there is $\bar{p}\leq_\mathbf{p} p$ such that $\bar{p}\in D_r$. 
	Since $s\in \bar{p}^{<\omega}$ and $r\in L[x_{\bar{p}}]$, we get that 
	\[L[x_{\bar{p}}]\models \exists t\in \bar{p}^{<\omega} \Psi(r,s,t).\]
	
		By absoluteness of $\Psi$ and noticing $t\in\bar{p}^{<\omega}\subseteq\mathcal{P}^{<\omega}$, we obtain that it is true (in $L[g,h]$) that $\exists t\in \mathcal{P}^{<\omega}\Psi(r,s,t)$. 
		Since $r$ and $s$ were arbitrary, we get that 
	\[ L(\RR, \mathcal{P})^{V[g,h]} \models \mathcal{P}\subseteq\RR\text{ and } \psi (\mathcal{P}),\]
	as we wanted to show.
\end{proof}

\begin{rmk}
	Notice that the formulas of the form $\exists r \in \RR \exists s\in p^{<\omega} \Psi(r,s)$ also work in the proof.
    However, the applications will be of the form
	\[\forall r\in \RR (\forall s\; \psi_1(r,s) \text{ and } \exists t\; \psi_2(r,t))\]
	which is equivalent to a formula as in the hypothesis of Lemma~\ref{lemma: local forcing forces the object}.
\end{rmk}

\begin{dfn}
We will say that $X$ is a  \emph{standard Polish space} if it is a countable product of $\omega^\omega, 2^\omega, [0,1],$ or $\RR$. 
Notice that they can all be coded by the reals in a natural way. 
\end{dfn}

\begin{dfn}
		Let $\mathbf{Q}$ be the finite support product of $\omega_1$-many copies of Cohen forcing, let $g$ be a $\mathbf{Q}$-generic filter over $L$. 
		Let $\pbelow$ be a Borel preorder on a standard Polish space $X$.
	Let $\Pprelin$ be the local forcing poset in $L[g]$ given by
	\[p \in \Pprelin \iff \exists x \in \RR\; L[x]\models \text{``$p\subseteq X\times X$ and  $p$ is a prelinearization of $\pbelow$,''} \]
	ordered by reverse inclusion. 
\end{dfn}

Notice that it is a local forcing, meaning that for every $p$ a real witness $x$ for being a condition is definable from $p$. 
This is due to the fact that $p$ is a total relation and we can assume $X=\RR$ since it is a standard Polish space. 

Moreover, if we denote by $\psi(p)$ the natural formula describing that $p$ is a prelinearization of $\pbelow$, then $\psi(p)$ is (equivalent to a formula) of the form of the hypothesis of Lemma~\ref{lemma: local forcing forces the object}.
More concretely, since
\begin{equation}
\begin{gathered}
    p \text{ is a prelinearization of }\pbelow \\
    \iff \\
    {\pbelow} \subseteq p, p\; \text{is a prelinear order and } \forall x,y\in X (x\pbelow y \land \neg (y\pbelow x)) \rightarrow (y,x)\not\in p,
    \end{gathered}
\end{equation}

we can write $\psi(p)$ as
\[\forall r_0, r_1 \in X \left(\exists t\in p\;\psi_0(r_0,r_1,t) \land \forall s \in p \;\psi_1(r_0,r_1,s) \land \psi_2 \right)
\]
where 
\begin{equation*}
\begin{gathered}
\psi_0(r_0,r_1,t) \text{ denotes $``r_0\pbelow r_1 \rightarrow (r_0,r_1)=t$,'' and}\\
\psi_1(r_0,r_1,s): \text{ denotes ``$(r_0 \pbelow r_1 \land \neg (r_1 \pbelow r_0)) \rightarrow (r_0,r_1)\neq s$,'' and}\\
\psi_2(p) \text{ denotes ``$p \text{ is a total preorder.}$''}
\end{gathered}
\end{equation*}

\begin{dfn}\label{def: cohen funicular}
	An Borel prelinear order $\pbelow$ on a Polish space $X$ is \emph{Cohen-funicular} over $V$ iff for every $g$, $h$ mutually $\mathbf{C}$-generic over $V$ and for every $x \in V[g]$ and $y \in V[h]$ with
	$x \pbelow y$, there is $z \in V$ with $x \pbelow z \pbelow y$.
\end{dfn}

Notice that a Borel prelinear order that is funicular as in Definition \ref{def:funicular} is Cohen-funicular. Therefore, all the preorders in Lemma~\ref{lemma: exs of funicular preorders} are also Cohen-funicular.  

\begin{cor}[Corollary of Theorem \ref{th: no ultrafilter no inaccessible}] \label{thm:local-prelinearization}
	Let $\mathbf{Q}$ be the finite support product of $\omega_1$-many copies of Cohen forcing, let $g$ be a $\mathbf{Q}$-generic filter over $L$. 
	Let $\pbelow$ be a Borel Cohen-funicular preorder and consider the local forcing $\mathbf{P}_{\pbelow}$.  
	Let $h$ be  $\mathbf{P}_{\pbelow}$-generic over $L[g]$ and $\mathcal{P}=\cup h$. Then
	\[L(\RR, \mathcal{P})^{L[g,h]}\models \mathsf{ZF}+\mathsf{DC}+\neg\Uf(\omega)+\mathcal{P} \text{ is a prelinearization of } \pbelow.\]
\end{cor}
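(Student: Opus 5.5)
The plan is to check that $\Pprelin$ satisfies the hypotheses of Theorem~\ref{th: no ultrafilter no inaccessible} (with $n=3$, i.e.\ $(3,2)$-amalgamation) and of Lemma~\ref{lemma: local forcing forces the object}. The conclusion then follows by combining the two results. We already observed that $\Pprelin$ is a local forcing and that $\psi$ has the required syntactic form. So three things remain: $\sigma$-closure, item~\eqref{item: def - extendability} (extendability), and item~\eqref{item: amalgamation th ultrafilter} for $n=3$, $m=2$.

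For $\sigma$-closure, take a descending sequence $p_0\supseteq$-below $p_1\ldots$ in $L[g]$ with witnesses $x_k$. Each $x_k$ lies in some $L[g\restriction\gamma_k]$. Let $\gamma=\sup\gamma_k<\omega_1$, and extend $\bigcup_k p_k$ inside $L[g\restriction\gamma]$ (a ZFC model) to a prelinearization of $\pbelow^{L[g\restriction\gamma]}$ by Theorem~\ref{lemma: prereq preorder can be extended to prelin}. This requires that $\bigcup p_k$ is a preorder which is a sub-prelinearization; this holds because it is an increasing union of prelinearizations on increasing domains $X^{L[x_k]}$, and Borelness of $\pbelow$ makes the strict relation absolute. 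Extendability is the same argument applied to a single $p$ and any $\gamma\ge\beta$ with $p\in L[g\restriction\gamma]$. Since total prelinearizations in $L[g\restriction\gamma]$ are coded from a real generating that model, these are genuine conditions.

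The main step is $(3,2)$-amalgamation, and I would mimic Lemma~\ref{lemma: funicular implies 32balanced}(\ref{total-balance}). Fix a condition $p$, which is a prelinearization $\pless$ of $\pbelow$ on $X^{M}$ where $M=L[g\restriction\gamma]$. Take $g_0,g_1,g_2$ that are pairwise mutually $\mathbf{Q}$-generic over $M$, and conditions $p_i\in\Pprelin^{L[g_i]}$ extending $p$. Working in $L[g]$, it suffices to show that the transitive closure of $p_0\cup p_1\cup p_2\cup\pbelow$ contains no strict cycle; then Theorem~\ref{lemma: prereq preorder can be extended to prelin} inside a suitable $L[g\restriction\delta]$ containing everything yields a common extension. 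The cycle analysis then proceeds by induction on the number of $\psbelow$-edges. First, vertices shared by two different $p_i$ lie in $L[g_i]\cap L[g_j]=M$ by pairwise mutual genericity, where they are compared by $\pless$. Second, a $\psbelow$-edge from $x\in L[g_i]$ to $y\in L[g_j]$ is handled by funicularity, which supplies an interpolant $z\in M$ that removes the edge. The case $i=j$ is trivial, since $p_i$ already respects $\psbelow$.

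The obstacle is that funicularity is only assumed for mutually $\mathbf{C}$-generic pairs over $V$. Here we need it for $L[g_i]$ and $L[g_j]$ over $M$. I would reduce to the Cohen case as follows. The elements $x,y$ are reals, so $x\in M[c_i]$ and $y\in M[c_j]$ for single Cohen reals $c_i,c_j$ coming from countable initial segments, by Theorem~\ref{thm:Cohen-properties}. Mutual genericity of $g_i,g_j$ gives mutual genericity of $c_i,c_j$ over $M$. Apply Cohen-funicularity over $M$, which is legitimate since the definition is over an arbitrary ground. One must also check that $x\pbelow y$ evaluated in $L[g]$ agrees with its evaluation in $M[c_i,c_j]$, which holds by Borel absoluteness.
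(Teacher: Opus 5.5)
Your proposal follows the paper's proof essentially step for step: $\sigma$-closure and extendability via Szpilrajn--Hansson inside an inner model of the form $L[x]$, $(3,2)$-amalgamation via the cycle-elimination argument of Lemma~\ref{lemma: funicular implies 32balanced}(\ref{total-balance}), and then Theorem~\ref{th: no ultrafilter no inaccessible} together with Lemma~\ref{lemma: local forcing forces the object}; your reduction of the interpolation step to mutually generic single Cohen reals over $M$ (with Borel absoluteness) spells out what the paper leaves to the word ``analogous''. Two small points: in the $\sigma$-closure step, use the ccc of $\mathbf{Q}$ to take $\gamma<\omega_1$ large enough that the sequence $\langle p_k\rangle_k$ itself lies in $L[g\restriction\gamma]$, since $\sup_k\gamma_k$ alone need not suffice; and, exactly as in the paper, comparing junction vertices ``by $\pless$'' presupposes that conditions below $p$ restrict to $\pless$ on $X^M$ rather than merely containing it.
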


\begin{proof}
	Naturally, we want to apply Theorem~\ref{th: no ultrafilter no inaccessible} and Lemma~\ref{lemma: local forcing forces the object} for the forcing $\Pprelin$. 
	We have established that $\Pprelin$ is a local forcing poset. We claim it is $\sigma$-closed. 
    Suppose $\{\pless_n\}_{n\in\omega}$ is a decreasing sequence of conditions in $\Pprelin$ and suppose that $x_n$ is a real witnessing $\pless_n$ is a condition for each $n\in \omega$.
    Thus in $L[x_n]$, $\pless_n$ is a prelinearization of $\pbelow$.
    Take $\pless^*=\bigcup_{n\in \omega} p_n$ and $x= \bigoplus_{n\in \omega} x_n$. 
    It is clear that 
    \[ L[x]\models \text{``${\pbelow} \subseteq {\pless^*} \subseteq X\times X$ and $\pless^*$ is a preorder.''} \]
    Moreover, fix $x,y\in \pless^*$ such that $x\pbelow y$ and $y\not \pbelow x$.
    There is $n\in \omega$ such that $(x,y)\in \pless_m$ for all $m\geq n$.  
    Since $\pless_m$ is a prelinearization of $\pbelow$ in the corresponding model, in particular $(y,x)\not\in \pless_m$ for all $m\geq n$. 
    Therefore $(y,x)\not \in \pless^*$. 
    In summary, $\pless^*$ is a preorder that extends $\pbelow$ and such that ${\psbelow} \subseteq {<^*}$. 
    By Lemma~\ref{lemma: prereq preorder can be extended to prelin}, in $L[x]$, $\pless^*$ can be extended to a prelinear order $\pless$ that is a prelinearization of $\pbelow$.
    Hence $\pless \in \Pprelin$ and $\Pprelin$ is $\sigma$-closed.
    
    Moreover, $\Pprelin$ satisfies $(3,2)$-amalgamation. For the first item, notice that for any condition $p=\pless$ whose locality is witnessed by the real $x$ and for any $\beta<\omega_1$, there is $\gamma\geq \beta$ such that $x\in L[g\restriction \gamma]$. 
    In $L[g\restriction \gamma]$, $\pless$ is  a preorder that extends $\pbelow$ and such that ${\psbelow} \subseteq {<}$. 
    Therefore, by Lemma~\ref{lemma: prereq preorder can be extended to prelin}, $\pless$ can be extended in $L[g\restriction\gamma]$ to a prelinearization of $\pbelow$.
	For the second item, the proof is analogous to the argument in the proof of item (1) in Lemma~\ref{lemma: funicular implies 32balanced}. 
	
Applying Theorem~\ref{th: no ultrafilter no inaccessible}, we get that 
\begin{equation} \label{eq: model has no ultrafilter}
	L(\RR,\mathcal{P})^{L[g,h]}\models\mathsf{ZF}+ \mathsf{DC}+\neg\Uf(\omega).
\end{equation}

Furthermore, recall that we can define the forcing $\Pprelin$ in a way that conforms to the form of $\psi(p)$ in the hypothesis of Lemma~\ref{lemma: local forcing forces the object} as described earlier. 
We can apply then Lemma~\ref{lemma: local forcing forces the object} and, together with~\eqref{eq: model has no ultrafilter}, we obtain: 
\[	L(\RR,\mathcal{P})^{L[g,h]}\models \mathsf{ZF}+\mathsf{DC}+\neg\Uf(\omega) + \psi(\mathcal{P}).\]	
\end{proof}

\section{Outlook}

This work just begins the study of the strength of the existence of prelinearizations of Borel preorders as choice principles, and naturally many open questions remain.

\begin{quest}
Let $X$ be a Polish space.
For which analytic preorders $\pbelow$ does the existence of a prelinearization imply the existence of a nonprincipal ultrafilter on $\omega$?
Or a Hamel basis for $\reals$ over $\rationals$?
When does the existence of a prelinearization in a transitive extension $M$ of $W$ imply the existence of a set of ordinals in $M \setminus W$?
\end{quest}

\begin{quest}
Is there a combinatorial property about preorders that characterizes funicularity? 
\end{quest}

\begin{quest}
For which Borel ideals $I$ over $\omega$ is the preorder $\le_I$ on $\reals^\omega$ defined for $x, y \in \reals^\omega$ by
\[ x \le_I y \Leftrightarrow \{ n < \omega : x(n) > y(n) \} \in I, \]
funicular?
This is of interest in social choice theory, because such orders arise naturally in the analysis of strengthenings of the Pareto efficiency condition.
\end{quest}

\emph{Acknowledgement.} The second author gratefully acknowledges the help provided by his advisors, Justin Moore and Paul Larson, in conducting this research and preparing this paper.\\

No data are associated with this article. For the purpose of open access, the authors have applied a CC-BY licence to any Author Accepted Manuscript version arising from this submission.

\small{
\bibliographystyle{siam}
\bibliography{math}
}
\end{document}